\numberwithin{theorem}{section}
\newcommand{\TheTitle}{Numerical Integration in Multiple Dimensions with Designed Quadrature}
\newcommand{\TheAuthors}{V. Keshavarzzadeh, R. M. Kirby, and A. Narayan}
\title{{\TheTitle}\thanks{Accepted for publication in SIAM Journal on Scientific Computing - Methods and Algorithms for Scientific Computing section. \funding{This research was sponsored by ARL under Cooperative Agreement Number W911NF-12-2-0023. The views and conclusions contained in this document are those of the authors and should not be interpreted as representing the official policies, either expressed or implied, of ARL or the U.S. Government. The U.S. Government is authorized to reproduce and distribute reprints for Government purposes notwithstanding any copyright notation herein. The first and third authors are partially supported by AFOSR FA9550-15-1-0467. The third author is partially supported by DARPA EQUiPS N660011524053.}}}
\author{
  Vahid Keshavarzzadeh\thanks{Scientific Computing and Imaging Institute, University of Utah, Salt Lake City, UT (\email{vkeshava@sci.utah.edu}, \email{kirby@sci.utah.edu}, \email{akil@sci.utah.edu}).}
  \and
  Robert M. Kirby\footnotemark[2]\hskip 3pt$^,$\thanks{School of Computing, University of Utah, Salt Lake City, UT}
  \and
  Akil Narayan\footnotemark[2]\hskip 3pt$^,$\thanks{Department of Mathematics, University of Utah, Salt Lake City, UT}
}
\newcommand{\annote}[1]{#1}
\newcommand{\revv}[1]{#1}
\newcommand{\bs}[1]{\boldsymbol{#1}}
\begin{document}

\maketitle

\begin{abstract}
  We present a systematic computational framework for generating positive quadrature rules in multiple dimensions on general geometries. A direct moment-matching formulation that enforces exact integration on polynomial subspaces yields nonlinear conditions and geometric constraints on nodes and weights. We use penalty methods to address the geometric constraints, and subsequently solve a quadratic minimization problem via the Gauss-Newton method. Our analysis provides guidance on requisite sizes of quadrature rules for a given polynomial subspace, and furnishes useful user-end stability bounds on error in the quadrature rule in the case when the polynomial moment conditions are violated by a small amount due to, e.g., finite precision limitations or stagnation of the optimization procedure. We present several numerical examples investigating optimal low-degree quadrature rules, Lebesgue constants, and 100-dimensional quadrature. Our capstone examples compare our quadrature approach to popular alternatives, such as sparse grids and quasi-Monte Carlo methods, for problems in linear elasticity and topology optimization.
\end{abstract}

\begin{keywords}
Numerical Integration, Multi Dimensions, Polynomial Approximation, Quadrature Optimization
\end{keywords}

\begin{AMS}
  41A55, 65D32
\end{AMS}

\section{Introduction}

Numerical quadrature, the process of computing approximations to integrals, is widely used in many fields of science and engineering. A convenient and popular choice is a quadrature rule that uses point evaluations of a function $f$:
\begin{align*}
  \int_\Gamma f(\bs{x}) \omega(\bs{x}) \dx{x} \approx \sum_{j=1}^n f(\bs{x}_j) w_j,
\end{align*}
where $\Gamma$ is some set in $d$-dimensional Euclidean space $\R^d$, $\omega$ is a positive weight function, and $\bs{x}_j$ and $w_j$ are the nodes and weights, respectively, of the quadrature rule that must be determined. The main desirable properties of quadrature rules are accuracy for a broad class of functions, a small number $n$ of nodes/weights, and positivity of the weights. \annote{(Positive weights are desired so that the absolute condition number of the quadrature rule is controlled.)}

In one dimension, Gaussian quadrature rules \cite{Ma96,szego_orthogonal_1975} satisfy many of these desirable properties, but computing an efficient quadrature rule (\annote{or ``cubature'' rule}) for higher dimensions is a considerably more challenging problem. When $\Gamma$ and $\omega$ are of tensor-product form, one straightforward construction results from tensorization of univariate quadrature rules. However, the computational complexity required to evaluate $f$ at the nodes of a tensorized quadrature rule quickly succumbs to the curse of dimensionality.

Substantial progress has been made in constructing attractive multivariate quadrature rules. Sparse grids rely on a sophisticated manipulation of univariate quadrature rules \cite{Bungartz04,gerstner_numerical_1998}. Quasi-Monte Carlo methods generate sequences that have low-discrepancy properties \cite{Niederreiter92,Owen03,sloan_when_1998}. Mathematical characterizations of quadrature rules with specified exactness on polynomial spaces yield efficient nodes and weights \cite{Bos06,caliari_bivariate_2005,stroud_remarks_1957,xu_characterization_1994}.

The main contribution of this paper is a systematic computational approach for designing multivariate quadrature rules with exactness on general finite-dimensional polynomial spaces.  \annote{Using polynomial exactness as a desideratum for constructing quadrature rules is not the only approach one could use (e.g., \revv{quasi-Monte Carlo} methods do not adopt this approach). \revv{However, when the integrand $f$ can be accurately approximated by a polynomial expansion with a small number of significant terms, then approximating the integral with a quadrature rule that is designed to integrate the significant terms can be very efficient \cite{canuto_spectral_2011,canuto_spectral_2014}}. In particular, finite-dimensional polynomial spaces can well-approximate solutions to some parametric operator equations \cite{cohen_convergence_2010}, and empirical tests with many engineering problems show that polynomial approximations are very efficient \cite{agarwal_domain_2009,babuska_stochastic_2010,bungartz_multivariate_2003}.}

Our computational approach revolves around optimization; many algorithms for computing nodal sets via optimization have already been proposed \cite{Ma96,mousavi_generalized_2010,Ryu15,taylor_cardinal_2007,taylor_algorithm_2000,van_barel_approximating_2014,Xiao10}.  Our method, which we call \textit{designed quadrature}, has the following advantages:
\begin{itemize}
  \item we can successfully compute nodal sets in up to $100$ dimensions;
  \item positivity of the weights is ensured;
  \item quadrature rules over non-standard geometries can be computed; and
  \item a prescribed polynomial accuracy can be sought over general polynomial spaces, not restricted to, e.g., total degree spaces.
\end{itemize}
Our approach is simple: we formulate moment-matching conditions and geometric constraints that prescribe nonlinear conditions on the nodes and weights. This direct formulation allows significant flexibility with respect to geometry, weight function $\omega$, and polynomial accuracy. \annote{Indeed, our procedures can compute quadrature rules with hyperbolic cross polynomial spaces, see Section \ref{sec:results-highd}, and can constrain nodal locations to awkward geometries, see Section \ref{sec:results-U}.} Our computational approach is to use constrained optimization algorithms to compute a quadrature rule from the moment-matching conditions. Our mathematical analysis provides a stability bound on error of the quadrature rule if the moment-matching conditions are violated (e.g., due to numerical finite precision). We apply our designed quadrature rules to several realistic problems in computational science, including problems in linear elasticity and topology optimization. Comparisons against competing methods, such as sparse grids and low-discrepancy sequences, illustrate that designed quadrature often attains superior accuracy with many fewer nodes.

Our procedure is not without shortcomings: Being a direct moment-matching problem, our framework relies on large-scale optimization in high dimensions. For a specified polynomial subspace on which we require integration accuracy, we cannot \textit{a priori} determine the number of nodes that our procedure will produce (although we review some theory that provides upper and lower bounds for $n$). \annote{We likewise cannot ensure that our algorithm produces an optimal quadrature rule size, but our numerical results suggest favorable comparison with alterative techniques, see Section \ref{sec:results-comparison}.} Some of the optimization tools we use have tunable parameters; we have made automated choices for these parameters but leave to future work to prove that the algorithm performs well for arbitrary dimensions, weight functions, or polynomial spaces.

This paper is organized as follows. In Section \ref{sec:math} we discuss the mathematical setting and formulate the optimization problem. This section also presents theory for the requisite number of nodes and stability of quadrature rules for approximate moment-matching. Section \ref{S3} details the computational framework for generating designed quadrature rules. Numerical results are shown in Section \ref{sec:results}.

\section{Multivariate Quadrature}\label{sec:math}

\subsection{Notation}

Let $\omega$ be a given \revv{non-negative} weight function (e.g., a probability density function) whose support is $\Gamma \subset \R^d$, where $d \geq 1$ and $\Gamma$ need not be compact. A point $\bs{x} \in \R^d$ has components $\bs{x} = \left( x^{(1)}, x^{(2)}, \ldots, x^{(d)} \right)$. The space $L^2_\omega(\Gamma)$ is the set of functions $f$ defined by
\begin{align*}
  L^2_\omega(\Gamma) &= \left\{ f: \Gamma \rightarrow \R \; \big| \; \|f \| < \infty \right\}, &
  \left\| f \right\|^2 &= \left( f, f \right), &
  \left( f, g \right) &= \int_\Gamma f(\bs{x}) g(\bs{x}) \omega(\bs{x}) \dx{\bs{x}}.
\end{align*}
We use standard multi-index notation: $\bs{\alpha} \in \N_0^d$ denotes a multi-index, and $\Lambda$ a collection of multi-indices. We have
\begin{align*}
  \bs{\alpha} &= (\alpha_1, \ldots, \alpha_d), & \bs{x}^{\bs{\alpha}} &= \prod_{j=1}^d \left( x^{(j)} \right)^{\alpha_j}, & |\bs{\alpha}| = \sum_{j=1}^d \alpha_j.
\end{align*}
We impose a partial ordering on multi-indices via component-wise comparisons: with $\bs{\alpha}$, $\bs{\beta} \in \N_0^d$, then $\bs{\alpha} \leq \bs{\beta}$ if and only if all component-wise inequalities are true. A multi-index set $\Lambda$ is called \textit{downward closed} if
\begin{align*}
  \bs{\alpha} \in \Lambda \hskip 10pt \Longrightarrow \hskip 10pt \bs{\beta} \in \Lambda \hskip 15pt \forall \; \bs{\beta} \leq \bs{\alpha}.
\end{align*}
We assume throughout this paper that the weight function has finite polynomial moments of all orders:
\begin{align*}
  \int_\Gamma \left(\bs{x}^{\bs \alpha}\right)^2 \omega(\bs{x}) &< \infty, & \bs \alpha &\in \N_0^d.
\end{align*}
This assumption ensures existence of polynomial moments. Our ultimate goal is to construct a set of $n$ points $\left\{ \bs{x}_q \right\}_{q=1}^n \subset \Gamma$ and positive weights $w_q > 0$ such that
\begin{subequations}
\begin{align}\label{eq:quadrature-approx}
  I(f) = \int_\Gamma f(\bs{x}) \omega(\bs{x}) \dx{x} \approx \sum_{q=1}^n w_q f(\bs{x}_q),
\end{align}
for functions $f$ within a ``large" class of functions. We attempt to achieve this by enforcing equality above for $f$ in a subspace $\Pi$ of polynomials:
\begin{align}\label{eq:quadrature-equality}
  \int_\Gamma f(\bs{x}) \omega(\bs{x}) \dx{x} &= \sum_{q=1}^n w_q f(\bs{x}_q), & f &\in \Pi.
\end{align}
\end{subequations}
\annote{\revv{The quadrature strategy is accurate if $f$ can be well-approximated by a polynomial from $\Pi$.} There are numerous technical conditions on $\Pi$ and $f$ that yield quantitative statements about polynomial approximation accuracy, e.g., \cite{bernardi_spectral_1997}. In this article, we assume that $\Pi$ is given and fixed through some \textit{a priori} study ensuring that there exists a polynomial in $\Pi$ that accurately approximates $f$ to within some user-specified tolerance.} Typically we will define $\Pi$ through some finite multi-index set $\Lambda$:
\begin{align*}
  \Pi = \mathrm{span} \left\{ \bs{x}^ {\bs \alpha} \;\; \big| \;\; \bs \alpha \in \Lambda \right\}.
\end{align*}

In many applications, the function $f$ typically exhibits smoothness (e.g., integrable high-order derivatives), which in turn implies that polynomial approximations converge at a high order with respect to the degree of approximation. Under the assumption that $f$ is smooth, we therefore expect that the integral of a polynomial that approximates $f$ to be a good approximation if the approximating polynomial space $\Pi$ contains high-degree polynomials. Our main goal in this paper is then familiar when viewed through the lens of classical analysis: make $\Pi$ as large as possible while keeping $n$ as small as possible.

Two particularly popular choices for polynomial spaces $\Pi$ can be defined by the index sets
\begin{align*}
  \Lambda_{\mathcal{T}_r} &= \left\{ \bs \alpha \in \N_0^d \;\; \big| \;\; |\bs \alpha| \leq r \right\}, &
  \Lambda_{\mathcal{H}_r} &= \left\{ \bs \alpha \in \N_0^d \;\; \big| \;\; \prod_{j=1}^d (\alpha_j+1) \leq r+1 \right\},
\end{align*}
for some non-negative integer $r$. Both of these multi-index sets are downward closed. The total order and hyperbolic cross polynomial subspaces are defined by, respectively,
\begin{align}\label{eq:polynomial-spaces}
  \Pi_{\mathcal{T}_r} &= \mathrm{span} \left\{ \bs{x}^{\bs \alpha} \;\; \big| \;\; \bs \alpha \in \Lambda_{\mathcal{T}_r} \right\}, &
  \Pi_{\mathcal{H}_r} &= \mathrm{span} \left\{ \bs{x}^{\bs \alpha} \;\; \big| \;\; \bs \alpha \in \Lambda_{\mathcal{H}_r} \right\}.
\end{align}

The algorithm we present in this paper applies to general polynomial spaces, but our numerical examples will focus on the spaces above since they are common in large-scale computing problems.

\subsection{Univariate rules: Gauss Quadrature}

When $\Gamma \subset \R$, the optimal quadrature rule is provided by the $\omega$-Gauss quadrature rule. In one dimension, we use the shorthand $\Pi_k = \Pi_{\mathcal{T}_k}$. The first step in defining this rule is to prescribe an orthonormal basis for $\Pi_{k}$. A Gram-Schmidt argument implies that such a basis of orthonormal polynomials exists with elements $p_m(\cdot)$, where $\deg p_m = m$. All univariate orthonormal polynomial families satisfy the three-term recurrence relation,
\begin{align}\label{eq:three-term-recurrence}
  x p_m(x) = \sqrt{b_m} p_{m-1}(x) + a_m p_m(x) + \sqrt{b_{m+1}} p_{m+1}(x),
\end{align}
for $m \geq 0$, with $p_{-1} \equiv 0$ and $p_0 \equiv 1/\sqrt{b_0}$ to seed the recurrence. The recurrence coefficients are given by
\begin{align*}
  a_m &= (x p_m, p_m), & b_m &= \frac{(p_m, p_m)}{(p_{m-1}, p_{m-1})},
\end{align*}
for $m \geq 0$, with $b_0 = (p_0, p_0)$. Classical orthogonal polynomial families, such as the Legendre and Hermite polynomials, fit this mold with explicit formula for the $a_n$ and $b_n$ coefficients \cite{szego_orthogonal_1975}. Gaussian quadrature rules are $n$-point rules that exactly integrate polynomials in $\Pi_{2n-1}$\cite{Stoer2002,Davis07}.

\begin{theorem}[Gaussian quadrature]\label{TH2_1}
Let $x_{1},\ldots,x_{n}$ be the roots of the $n$th orthogonal polynomial $p_n(x)$ and let $w_{1},\ldots,w_{n}$ be the solution of the system of equations
\begin{equation}\label{THM1_0}
\sum_{q=1}^n p_j(x_q) w_q =
\begin{cases}
  \sqrt{b_0}, & \textrm{if } j=0\\
  0, & \textrm{for } j=1,\ldots,n-1.\\
\end{cases}
\end{equation}
Then $x_q \in \Gamma$ and $w_q>0$ for $q=1,2,\ldots,n$ and
\begin{equation}\label{THM1_1}
\displaystyle \int_{\Gamma} \omega(x) p(x) dx = \sum_{q=1}^n p(x_q) w_q
\end{equation}
holds for all polynomials $p \in \Pi_{2n-1}$.
\end{theorem}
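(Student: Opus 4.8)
The plan is to establish the four assertions in a logical order: first the location of the nodes, then the well-posedness of the weight system, then polynomial exactness on $\Pi_{2n-1}$, and finally positivity of the weights. All four rest on the single fact that $p_n$ is orthogonal to $\Pi_{n-1}$, together with the normalization of $p_0$.

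First I would show that the $n$ roots of $p_n$ are real, simple, and lie in the interior of the smallest interval containing $\Gamma$. The standard device is a sign-change argument: suppose $p_n$ changes sign at only $k$ points $y_1, \ldots, y_k$ interior to $\Gamma$ with $k < n$, and set $s(x) = \prod_{i=1}^k (x - y_i) \in \Pi_k \subseteq \Pi_{n-1}$. Then $p_n s$ has constant sign on $\Gamma$ and is not identically zero, so $(p_n, s) = \int_\Gamma \omega\, p_n s \, dx \neq 0$, contradicting $p_n \perp \Pi_{n-1}$. Hence $p_n$ has $n$ distinct real sign changes in $\Gamma$; since $\deg p_n = n$ these account for all of its roots, which are therefore simple and lie in $\Gamma$. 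In particular the nodes $x_q$ are distinct, so the $n \times n$ matrix $[p_j(x_q)]_{j=0,\ldots,n-1;\,q=1,\ldots,n}$ is a generalized Vandermonde matrix built from a basis of $\Pi_{n-1}$ evaluated at distinct points, hence nonsingular; this guarantees that the system \eqref{THM1_0} has a unique solution $w_1, \ldots, w_n$.

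Next I would reinterpret the right-hand side of \eqref{THM1_0}. Because $p_0$ is the constant $1/\sqrt{b_0}$, orthogonality gives $\int_\Gamma \omega\, p_j \, dx = \sqrt{b_0}\,(p_0, p_j) = 0$ for $1 \leq j \leq n-1$, while the normalization yields $\int_\Gamma \omega\, p_0 \, dx = \sqrt{b_0}$. Thus \eqref{THM1_0} states exactly that the rule integrates each basis polynomial $p_0, \ldots, p_{n-1}$ correctly, and by linearity it is exact on all of $\Pi_{n-1}$. To promote this to exactness on $\Pi_{2n-1}$, I would invoke the division algorithm: given $p \in \Pi_{2n-1}$, write $p = q\,p_n + r$ with $q, r \in \Pi_{n-1}$. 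On the integral side, $\int_\Gamma \omega\, q p_n \, dx = (q, p_n) = 0$, so $\int_\Gamma \omega\, p \, dx = \int_\Gamma \omega\, r \, dx = \sum_q r(x_q) w_q$ by the exactness just established on $\Pi_{n-1}$. On the quadrature side, each node is a root of $p_n$, so $p(x_q) = r(x_q)$ and $\sum_q p(x_q) w_q = \sum_q r(x_q) w_q$; the two sides agree, proving \eqref{THM1_1}.

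Finally, for positivity I would apply \eqref{THM1_1} to squared Lagrange interpolants. Fixing $k$, let $\ell_k(x) = \prod_{q \neq k} (x - x_q)/(x_k - x_q) \in \Pi_{n-1}$, which is well-defined by distinctness and satisfies $\ell_k(x_q) = \delta_{qk}$. Then $\ell_k^2 \in \Pi_{2n-2} \subset \Pi_{2n-1}$, so exactness gives $w_k = \sum_q \ell_k(x_q)^2 w_q = \int_\Gamma \omega\, \ell_k^2 \, dx > 0$, the strict inequality following because $\omega$ is nonnegative with positive total mass and $\ell_k^2$ is nonnegative and vanishes only at finitely many points. I expect the main obstacle to be the node-location step: the sign-change argument is the one place that genuinely uses the real structure of $\Gamma \subseteq \R$ and the positivity of $\omega$, and it underpins both distinctness (needed for the Vandermonde and Lagrange constructions) and the membership $x_q \in \Gamma$; the remaining steps are then essentially algebraic consequences of orthogonality and polynomial division.
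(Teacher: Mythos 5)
Your proposal is correct, but there is no in-paper proof to compare it against: the paper states Theorem 2.2 as classical background and simply cites \cite{Stoer2002,Davis07}, and your argument --- sign-change counting for the nodes, nonsingularity of the generalized Vandermonde matrix for well-posedness of \eqref{THM1_0}, polynomial division $p = q\,p_n + r$ for exactness on $\Pi_{2n-1}$, and squared Lagrange cardinal functions for $w_k = \|\ell_k\|^2 > 0$ --- is exactly the standard proof found in those references, with each step sound. The only point worth flagging is that the sign-change argument places the roots in the convex hull of the support of $\omega$, so the assertion $x_q \in \Gamma$ (and your phrase ``sign changes in $\Gamma$'') implicitly assumes $\Gamma$ is an interval; this is the setting the paper intends (Legendre, Hermite, etc.), but for a disconnected support a node can fall in a gap, so stating the interval hypothesis explicitly would make the proof airtight.
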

Historically significant algorithmic strategies for computing Gauss quadrature rules are given in \cite{Gautschi68,Golub69}. The elegant linear algebraic formulations described in these references compute the quadrature rule with knowledge of only of a finite number of recurrence coefficients $a_n$, $b_n$.

\subsection{Multivariate polynomials}\label{sec:multivariate-polynomials}
If $\Gamma$ and $\omega(\bs{x})$ are both tensorial, then the generalization of univariate orthogonal polynomials to multivariate ones is straightforward. The tensorial structure implies
\begin{align*}
  \Gamma &= \times_{j=1}^d \Gamma_j, & \omega(\bs{x}) &= \prod_{j=1}^d \omega_j\left(x^{(j)}\right),
\end{align*}
for univariate domains $\Gamma_j \subset \R$ and univariate weights $\omega_j(\cdot)$. If $p^{(j)}_n(\cdot)$ is the univariate orthonormal polynomial family associated with $\omega_j$ over $\Gamma_j$, then
\begin{align}\label{eq:tensorial-pi}
  \pi_{\bs{\alpha}}(\bs{x}) &= \prod_{j=1}^d p^{(j)}_{\alpha_j}\left(x^{(j)}\right), & \bs{\alpha} \in \N_0^d,
\end{align}
defines a family of multivariate polynomials orthonormal under $\omega$, i.e., $\left(\pi_{\bs{\alpha}}, \pi_{\bs{\beta}} \right) = \delta_{\bs{\alpha}, \bs{\beta}}$, where $\delta$ is the Kronecker delta. The polynomial spaces in \eqref{eq:polynomial-spaces} can be written as
\begin{align*}
  \Pi_{\mathcal{T}_r} &= \mathrm{span} \left\{ \pi_{\bs{\alpha}} \;\; \big| \;\; \bs{\alpha} \in \Lambda_{\mathcal{T}_r} \right\} &
  \Pi_{\mathcal{H}_r} &= \mathrm{span} \left\{ \pi_{\bs{\alpha}} \;\; \big| \;\; \bs{\alpha} \in \Lambda_{\mathcal{H}_r} \right\}
\end{align*}
The following result is the cornerstone of our algorithm:

\begin{proposition}\label{PR2_1}
  Let $\Lambda$ be a multi-index set with $\bs{0} \in \Lambda$. Suppose that $\bm x_{1},\ldots,\bm x_{n}$ and $w_{1},\ldots,w_{n}$ are the solution of the system of equations
\begin{equation}\label{PRO2_0}
\sum_{q=1}^n \pi_{\bm \alpha}(\bm x_{q}) w_{q} =
\begin{cases}
  1/\pi_{\bs{0}}, &\textrm{if } \bm \alpha= \bm 0\\
  0, &\textrm{if } \bm \alpha \in \Lambda\backslash\{\bm 0\}\\
\end{cases}
\end{equation}
then
\begin{equation}\label{PRO2_1}
\displaystyle \int_{\bm \Gamma} \omega(\bm x) \pi(\bm x) d\bm x = \sum_{q=1}^n \pi(\bm x_{q}) w_{q}
\end{equation}
holds for all polynomials $\pi \in \mathrm{\Pi}_{\Lambda}$.
\end{proposition}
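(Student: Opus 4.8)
The plan is to exploit the linearity of both the exact integral $I(\pi) = \int_{\bm\Gamma} \omega(\bm x)\,\pi(\bm x)\,d\bm x$ and the quadrature functional $Q(\pi) := \sum_{q=1}^n w_q\, \pi(\bm x_q)$, together with the orthonormality of the tensorial basis $\{\pi_{\bm\alpha}\}$ from \eqref{eq:tensorial-pi}. Since every $\pi \in \Pi_\Lambda$ is by definition a finite linear combination $\pi = \sum_{\bm\alpha \in \Lambda} c_{\bm\alpha}\, \pi_{\bm\alpha}$, and both $I$ and $Q$ are linear in their argument, it suffices to verify $I(\pi_{\bm\alpha}) = Q(\pi_{\bm\alpha})$ for each single basis element $\pi_{\bm\alpha}$ with $\bm\alpha \in \Lambda$. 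The hypothesis \eqref{PRO2_0} prescribes the value of $Q(\pi_{\bm\alpha})$ exactly, so the entire argument reduces to computing $I(\pi_{\bm\alpha})$ and checking that it matches that prescribed right-hand side.

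First I would record that $\pi_{\bm 0}$ is a nonzero constant function: by the tensorial definition \eqref{eq:tensorial-pi}, $\pi_{\bm 0} = \prod_{j=1}^d p^{(j)}_0$, and each univariate seed $p^{(j)}_0$ is a positive constant, so $\pi_{\bm 0}$ is itself a positive constant and $1/\pi_{\bm 0}$ is well defined. This is precisely the role played by the hypothesis $\bm 0 \in \Lambda$.

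The key computation is then to evaluate $I(\pi_{\bm\alpha})$ by expressing the constant function $1$ through $\pi_{\bm 0}$. Because $\pi_{\bm 0}$ is constant, we have $1 = \pi_{\bm 0}/\pi_{\bm 0}$, whence
\[
  I(\pi_{\bm\alpha}) = \left(\pi_{\bm\alpha}, 1\right) = \frac{1}{\pi_{\bm 0}}\left(\pi_{\bm\alpha}, \pi_{\bm 0}\right) = \frac{1}{\pi_{\bm 0}}\,\delta_{\bm\alpha, \bm 0},
\]
using the orthonormality relation $\left(\pi_{\bm\alpha}, \pi_{\bm\beta}\right) = \delta_{\bm\alpha,\bm\beta}$. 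Hence $I(\pi_{\bm 0}) = 1/\pi_{\bm 0}$ and $I(\pi_{\bm\alpha}) = 0$ for $\bm\alpha \in \Lambda \setminus \{\bm 0\}$, which is exactly the right-hand side of \eqref{PRO2_0}. Since by assumption the nodes and weights satisfy \eqref{PRO2_0}, we conclude $Q(\pi_{\bm\alpha}) = I(\pi_{\bm\alpha})$ for every $\bm\alpha \in \Lambda$, and linearity extends the identity to all of $\Pi_\Lambda$, yielding \eqref{PRO2_1}.

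I do not anticipate a genuine obstacle here: the statement is essentially a reorganization of the moment conditions once one recognizes that integrating an orthonormal polynomial against $\omega$ coincides with taking its inner product against the constant $\pi_{\bm 0}$. The only point that requires a little care is the observation that $\pi_{\bm 0}$ is a nonzero constant, which simultaneously makes $1/\pi_{\bm 0}$ meaningful and reproduces the constant function $1$; this rests on the tensorial definition \eqref{eq:tensorial-pi} and on the hypothesis $\bm 0 \in \Lambda$.
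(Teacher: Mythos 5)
Your proof is correct and takes essentially the same route as the paper's: orthonormality gives $\int_\Gamma \pi_{\bs{\alpha}}(\bs{x})\,\omega(\bs{x})\,d\bs{x} = \delta_{\bs{\alpha},\bs{0}}/\pi_{\bs{0}}$, so \eqref{PRO2_0} is precisely a moment-matching condition and linearity extends exactness to all of $\Pi_\Lambda$ --- you simply write out in full the one-line argument the paper leaves terse. One small attribution slip worth noting: the hypothesis $\bs{0} \in \Lambda$ is not what makes $1/\pi_{\bs{0}}$ well defined (that follows from the recurrence seed $p_0 \equiv 1/\sqrt{b_0}$ being a nonzero constant, independently of $\Lambda$); its role is to guarantee that the constant's moment equation is actually among the conditions \eqref{PRO2_0}, i.e., that constants belong to $\Pi_\Lambda$.
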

The proof is straightforward by noting that $\int_\Gamma \pi_{\bs{\alpha}}(\bs{x}) \omega(\bs{x}) \dx{\bs{x}} = 0$ when $\bs{\alpha} \neq \bs{0}$ due to orthogonality, and thus \eqref{PRO2_0} is a moment-matching condition. Unlike Theorem~\ref{TH2_1}, this multivariate result does not guarantee the positivity of weights nor does it ensure that the nodes lie in $\Gamma$. We enforce these conditions in our computational framework in Section~\ref{S3}. Finally, we note that Proposition \ref{PR2_1} is true even when $\Gamma$ and $\omega$ are not tensorial. We concentrate on the tensorial situation in this paper because a tensorial assumption is standard for large dimension $d$.

One of the main uses of quadrature rules is in the construction of polynomial approximation via discrete quadrature. If $f$ is a given continuous function and $\Theta$ is a given multi-index set, then
\begin{align}\label{eq:discrete-quadrature}
  f(\bs{x}) \approx f_{\Theta}(\bs{x}) &= \sum_{\alpha \in \Theta} \widehat{f}_{\bs{\alpha}} \pi_{\bs{\alpha}}(\bs{x}), &
  \widehat{f}_{\bs{\alpha}} = \sum_{q=1}^n \pi_{\bs{\alpha}}\left(\bs{x}_{q}\right) f\left(\bs{x}_{q}\right) w_{q},
\end{align}
where $\widehat{f}_{\bs{\alpha}}$ are meant to approximate the Fourier ($L^2_\omega$-projection) coefficients of $f$. Ideally, if $f \in \Pi_{\Theta}$ then $f_{\Theta} = f$, i.e., this construction reproduces polynomials in $\Pi_\Theta$. As one expects, this only happens when the quadrature rule is sufficiently accurate, as defined by the size of $\Lambda$ in \eqref{PRO2_0}.
\begin{proposition}\label{prop:quadrature-stability}
  Let $\Lambda$ be a downward-closed multi-index set, and suppose that $\bs{x}_q$ and $w_q$ for $q = 1, \ldots, n$ define a quadrature rule satisfying \eqref{PRO2_0}. Let $\Theta$ be any index set satisfying
  \begin{align}\label{eq:Theta-assumption}
    \Theta + \Theta = \left\{ \bs{\alpha} + \bs{\beta} \;\; \big| \;\; \bs{\alpha},\bs{\beta} \in \Theta \right\} \subseteq \Lambda.
  \end{align}
  If $f \in \Pi_\Theta$, then $f_\Theta$ defined in \eqref{eq:discrete-quadrature} satisfies $f_\Theta = f$.
\end{proposition}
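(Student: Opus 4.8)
The plan is to reduce the claim to showing that the discrete coefficients $\widehat{f}_{\bs\alpha}$ produced by the quadrature rule coincide exactly with the true orthonormal expansion coefficients of $f$. Since $f \in \Pi_\Theta$ and the $\pi_{\bs\alpha}$ are $L^2_\omega$-orthonormal, I would first write $f = \sum_{\bs\beta \in \Theta} c_{\bs\beta} \pi_{\bs\beta}$, where by orthonormality $c_{\bs\beta} = (f, \pi_{\bs\beta})$ are the genuine Fourier coefficients and $c_{\bs\beta} = 0$ for $\bs\beta \notin \Theta$. It then suffices to prove $\widehat{f}_{\bs\alpha} = c_{\bs\alpha}$ for every $\bs\alpha \in \Theta$; combined with the definition of $f_\Theta$ in \eqref{eq:discrete-quadrature}, this yields $f_\Theta = \sum_{\bs\alpha \in \Theta} c_{\bs\alpha} \pi_{\bs\alpha} = f$.

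Next I would substitute the expansion of $f$ into the formula for $\widehat{f}_{\bs\alpha}$ in \eqref{eq:discrete-quadrature} and exchange the (finite) order of summation, obtaining
\begin{equation*}
  \widehat{f}_{\bs\alpha} = \sum_{q=1}^n \pi_{\bs\alpha}(\bs{x}_q) f(\bs{x}_q) w_q = \sum_{\bs\beta \in \Theta} c_{\bs\beta} \left( \sum_{q=1}^n \pi_{\bs\alpha}(\bs{x}_q) \pi_{\bs\beta}(\bs{x}_q) w_q \right).
\end{equation*}
The problem is thus reduced to evaluating the inner sum, i.e., to showing that the quadrature rule reproduces the discrete orthonormality relation $\sum_{q} \pi_{\bs\alpha}(\bs{x}_q) \pi_{\bs\beta}(\bs{x}_q) w_q = \delta_{\bs\alpha,\bs\beta}$ for all $\bs\alpha, \bs\beta \in \Theta$.

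The key step is to certify that the product $\pi_{\bs\alpha} \pi_{\bs\beta}$ lies in $\Pi_\Lambda$, so that Proposition~\ref{PR2_1} applies. Here I would exploit the tensorial structure \eqref{eq:tensorial-pi}: in each coordinate $j$ the product $p^{(j)}_{\alpha_j} p^{(j)}_{\beta_j}$ is a univariate polynomial of degree $\alpha_j + \beta_j$, hence a linear combination of $p^{(j)}_0, \ldots, p^{(j)}_{\alpha_j+\beta_j}$. Taking the tensor product over $j$ shows that $\pi_{\bs\alpha} \pi_{\bs\beta}$ is a linear combination of $\{\pi_{\bs\gamma} : \bs\gamma \leq \bs\alpha + \bs\beta\}$. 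Since $\bs\alpha, \bs\beta \in \Theta$ gives $\bs\alpha + \bs\beta \in \Theta + \Theta \subseteq \Lambda$ by \eqref{eq:Theta-assumption}, and since $\Lambda$ is downward closed, every such $\bs\gamma$ belongs to $\Lambda$; hence $\pi_{\bs\alpha} \pi_{\bs\beta} \in \Pi_\Lambda$. Applying the exactness \eqref{PRO2_1} of Proposition~\ref{PR2_1} and then orthonormality gives $\sum_q \pi_{\bs\alpha}(\bs{x}_q) \pi_{\bs\beta}(\bs{x}_q) w_q = (\pi_{\bs\alpha}, \pi_{\bs\beta}) = \delta_{\bs\alpha,\bs\beta}$, whence $\widehat{f}_{\bs\alpha} = c_{\bs\alpha}$ and the result follows.

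I expect the only real obstacle to be the degree-counting argument that places $\pi_{\bs\alpha}\pi_{\bs\beta}$ inside $\Pi_\Lambda$: this is precisely where both hypotheses are used, the condition $\Theta + \Theta \subseteq \Lambda$ controlling the top multi-degree and downward-closedness of $\Lambda$ absorbing all the lower-order terms generated by the product. Everything else is bookkeeping with finite sums and the orthonormality of the basis.
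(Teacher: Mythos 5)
Your proof is correct and takes essentially the same route as the paper: expand $f$ in the orthonormal basis, reduce the claim to $\widehat{f}_{\bs{\beta}} = f_{\bs{\beta}}$, and place $\pi_{\bs{\alpha}}\pi_{\bs{\beta}}$ in $\Pi_\Lambda$ via $\bs{\alpha}+\bs{\beta} \in \Theta + \Theta \subseteq \Lambda$ together with downward closedness, so that the exactness of Proposition~\ref{PR2_1} yields the discrete orthonormality relation. The only cosmetic difference is that you re-expand the univariate products $p^{(j)}_{\alpha_j} p^{(j)}_{\beta_j}$ in the orthonormal basis and tensorize, whereas the paper expands $\pi_{\bs{\alpha}}\pi_{\bs{\beta}}$ in monomials $\bs{x}^{\bs{\gamma}}$ with $\bs{\gamma} \leq \bs{\alpha}+\bs{\beta}$; both are the same degree-counting argument.
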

\begin{proof}
  Suppose $f \in \Pi_\Theta$, so that
  \begin{align*}
    f(\bs{x}) &= \sum_{\bs{\alpha} \in \Theta} f_{\bs{\alpha}} \pi_{\bs{\alpha}}(\bs{x}), & f_{\bs{\alpha}} &= \left( f, \pi_{\bs{\alpha}} \right),
  \end{align*}
  where the formula for the coefficients $f_{\bs{\alpha}}$ is due to orthogonality. We will show that the computed quadrature coefficients $\widehat{f}_{\bs{\alpha}}$ defined in \eqref{eq:discrete-quadrature} satisfy $\widehat{f}_{\bs{\alpha}} = f_{\bs{\alpha}}$. Fix $\bs{\beta} \in \Theta$. Then,
  \begin{align*}
    f(\bs{x}) \pi_{\bs{\beta}}(\bs{x}) = \sum_{\bs{\alpha} \in \Theta} f_{\bs{\alpha}} \pi_{\bs{\alpha}}(\bs{x}) \pi_{\bs{\beta}}(\bs{x}).
  \end{align*}
  There are coefficients $c_{\bs{\alpha},\bs{\gamma}}$ such that
  \begin{align*}
    \pi_{\bs{\alpha}} = \sum_{\bs{\gamma} \leq \bs{\alpha}} c_{\bs{\alpha},\bs{\gamma}} \bs{x}^{\bs{\gamma}}.
  \end{align*}
  Therefore,
  \begin{align*}
    \pi_{\bs{\alpha}}(\bs{x}) \pi_{\bs{\beta}}(\bs{x}) = \left( \sum_{\bs{\gamma} \leq \bs{\alpha}} c_{\bs{\alpha}, \bs{\gamma}} \bs{x}^{\bs{\gamma}}  \right) \left( \sum_{\bs{\gamma} \leq \bs{\beta}} c_{\bs{\beta}, \bs{\gamma}} \bs{x}^{\bs{\gamma}}  \right) = \sum_{\bs{\gamma} \leq \bs{\alpha} + \bs{\beta}} d_{\bs{\alpha}, \bs{\beta}, \bs{\gamma}} \bs{x}^{\bs{\gamma}}, 
  \end{align*}
  for some coefficients $d_{\bs{\alpha}, \bs{\beta},\bs{\gamma}}$. The index $\bs{\alpha} + \bs{\beta} \in \Lambda$ owing to the assumption \eqref{eq:Theta-assumption}, and since $\Lambda$ is downward closed, then we have that $\pi_{\bs{\alpha}}(\bs{x}) \pi_{\bs{\beta}}(\bs{x}) \in \Pi_{\Lambda}$. Therefore, the $n$-point quadrature rule integrates $\pi_{\bs{\alpha}}(\bs{x}) \pi_{\bs{\beta}}(\bs{x})$, and thus
  \begin{align*}
    \widehat{f}_{\bs{\beta}} = \sum_{q=1}^n f(\bs{x}_q) \pi_{\bs{\beta}}(\bs{x}_q) = \sum_{\bs{\alpha} \in \Theta} f_{\bs{\alpha}} \sum_{q=1}^n \pi_{\bs{\alpha}}(\bs{x}) \pi_{\bs{\beta}}(\bs{x}) = \sum_{\bs{\alpha} \in \Theta} f_{\bs{\alpha}} \left( \pi_{\bs{\alpha}}, \pi_{\bs{\beta}}\right) = f_{\bs{\beta}},
  \end{align*}
  Since $\widehat{f}_{\bs{\beta}} = f_{\bs{\beta}}$, then $f_\Theta = f$.
\end{proof}
The notion above of reproduction of multivariate polynomials is consistent with univariate Gauss quadrature: In one dimension with an $n$-point Gauss quadrature rule, we can reproduce polynomials up to degree $n-1$: Take $\Lambda = \left\{ 0, \ldots, 2n -1 \right\}$, and choose $\Theta = \left\{0, \ldots, n-1\right\}$. The polynomial $f_{\Theta}$ constructed by the procedure \eqref{eq:discrete-quadrature} matches the function $f$ if $f \in \Pi_\Theta$ since $\Theta + \Theta \subset \Lambda$. \annote{The above result codifies this condition in the multivariate case. Note that $\Theta \subset \Lambda$ is not a strict enough condition since the approximate Fourier coefficients defined in \eqref{eq:discrete-quadrature} will not necessarily be accurate. We also note that the integrand is a product of polynomials, therefore requiring exactness on polynomial products is the correct condition, hence the $\Theta + \Theta \subset \Gamma$ requirement.}

Given a multi-index set $\Lambda$, there is a smallest possible quadrature size $n$ such that \eqref{PRO2_0} holds. This smallest $n$ is given by the size of the largest $\Theta$ satisfying \eqref{eq:Theta-assumption}.
\begin{theorem}[\cite{Jakeman17quadrature}]\label{thm:half-set}
  Let $\Lambda$ be a downward-closed index set. The size $n$ of any quadrature rule satisfying \eqref{PRO2_0} has lower bound
  \begin{align*}
    n \geq \mathcal{L}(\Lambda) \coloneqq \max \left\{ \left|\Theta\right|\;\; \big| \;\; \Theta + \Theta \subseteq \Lambda \right\}.
  \end{align*}
\end{theorem}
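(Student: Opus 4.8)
The plan is to fix an index set $\Theta$ with $\Theta + \Theta \subseteq \Lambda$ and show that any quadrature rule satisfying \eqref{PRO2_0} must use at least $|\Theta|$ nodes; taking the maximum over all such $\Theta$ then yields the stated bound. The essential observation, already contained in the proof of Proposition~\ref{prop:quadrature-stability}, is that such a rule integrates every product $\pi_{\bs{\alpha}} \pi_{\bs{\beta}}$ with $\bs{\alpha}, \bs{\beta} \in \Theta$ \emph{exactly}. Indeed, $\pi_{\bs{\alpha}}\pi_{\bs{\beta}}$ is a polynomial supported on multi-indices $\bs{\gamma} \leq \bs{\alpha} + \bs{\beta}$; since $\bs{\alpha} + \bs{\beta} \in \Lambda$ by the hypothesis $\Theta+\Theta\subseteq\Lambda$ and $\Lambda$ is downward closed, we have $\pi_{\bs{\alpha}}\pi_{\bs{\beta}} \in \Pi_\Lambda$, so Proposition~\ref{PR2_1} applies to it.

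Next I would encode this exactness as a matrix identity. Let $m = |\Theta|$, enumerate $\Theta = \{\bs{\alpha}_1, \ldots, \bs{\alpha}_m\}$, and define the $m \times n$ generalized Vandermonde matrix $V$ with entries $V_{i,q} = \pi_{\bs{\alpha}_i}(\bs{x}_q)$, together with the diagonal weight matrix $W = \diag(w_1, \ldots, w_n)$. Exact integration of each $\pi_{\bs{\alpha}_i}\pi_{\bs{\alpha}_j}$, combined with orthonormality $\int_\Gamma \pi_{\bs{\alpha}_i}\pi_{\bs{\alpha}_j}\,\omega\,\dx{\bs{x}} = \delta_{i,j}$, says precisely that
\begin{equation*}
  V W V^{T} = I_m,
\end{equation*}
the $m \times m$ identity matrix.

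The bound now follows from a rank argument. Since $\operatorname{rank}(V W V^{T}) \le \operatorname{rank}(V) \le \min(m,n)$ and the left-hand side equals $\operatorname{rank}(I_m) = m$, we obtain $m \le \min(m,n)$, i.e. $n \ge m = |\Theta|$. Equivalently, were $n < |\Theta|$, the matrix $V$ would admit a nonzero left null vector $\bs{c}$ with $\bs{c}^{T} V = \bs{0}$, whence $\bs{c}^{T}(V W V^{T})\bs{c} = 0$ while $\bs{c}^{T} I_m \bs{c} = \|\bs{c}\|^2 > 0$, a contradiction. Since $\Theta$ was an arbitrary set satisfying $\Theta + \Theta \subseteq \Lambda$, taking the supremum of $|\Theta|$ over all such sets gives $n \geq \mathcal{L}(\Lambda)$.

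As for difficulty, the argument is short, and the only step requiring genuine care is the first one: verifying that the products $\pi_{\bs{\alpha}}\pi_{\bs{\beta}}$ truly lie in $\Pi_\Lambda$ so that the rule integrates them exactly. This is exactly where downward-closedness of $\Lambda$ is indispensable, since it guarantees that all the lower-order terms $\bs{x}^{\bs{\gamma}}$ with $\bs{\gamma} \le \bs{\alpha}+\bs{\beta}$ appearing in the product are also integrated exactly; without it the identity $V W V^{T} = I_m$ could fail. I would also note that positivity of the weights plays no role in this argument, so the lower bound in fact holds for arbitrary (possibly signed) quadrature rules satisfying \eqref{PRO2_0}.
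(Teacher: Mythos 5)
Your proof is correct. Note, however, that the paper itself does not prove this theorem: it is stated with a citation to \cite{Jakeman17quadrature}, so there is no in-paper argument to match against. The proof in that reference is, in essence, the dual formulation of yours: if $n < |\Theta|$, the evaluation map from $\Pi_\Theta$ to $\R^n$ has a nontrivial kernel, so there is a nonzero $p \in \Pi_\Theta$ vanishing at every node; since $p^2 \in \Pi_\Lambda$ (same downward-closedness argument you give), exactness forces $\|p\|^2 = \sum_q w_q p(\bs{x}_q)^2 = 0$, a contradiction. Your Gram-matrix identity $V W V^T = I_m$ packages precisely this: a left null vector $\bs{c}$ of $V$ corresponds to the vanishing polynomial $p = \sum_i c_i \pi_{\bs{\alpha}_i}$, and your quadratic-form contradiction $\bs{c}^T (VWV^T)\bs{c} = 0 \neq \|\bs{c}\|^2$ is the statement $\sum_q w_q p(\bs{x}_q)^2 \neq \|p\|^2$ in matrix clothing. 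The one ingredient you reuse from the paper --- that $\pi_{\bs{\alpha}}\pi_{\bs{\beta}} \in \Pi_\Lambda$ whenever $\bs{\alpha}+\bs{\beta} \in \Lambda$ and $\Lambda$ is downward closed --- is exactly the crux of the paper's proof of Proposition \ref{prop:quadrature-stability}, and you identify it correctly as the only delicate step (with the implicit caveat that the expansion $\pi_{\bs{\alpha}} = \sum_{\bs{\gamma} \leq \bs{\alpha}} c_{\bs{\alpha},\bs{\gamma}} \bs{x}^{\bs{\gamma}}$ uses the tensorial structure of the basis assumed in Section \ref{sec:multivariate-polynomials}). Your closing observation that positivity of the weights is never used is also correct and worth having on record: the rank argument goes through for signed weights, which is consistent with the theorem as stated, since \eqref{PRO2_0} imposes no sign condition.
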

The number $\mathcal{L}(\Lambda)$ defined above is called the maximal half-set size in \cite{Jakeman17quadrature}, and a corresponding $\mathcal{L}(\Lambda)$-point quadrature rule is a minimal rule. In that reference, concrete examples of (i) non-existence, and of (ii) existence but non-uniqueness of minimal multivariate quadrature rules achieving the lower bound above are shown. If $\Lambda = \Lambda_{2n-2}$ in the univariate case, Gaussian quadrature rules are non-unique. Our numerical algorithm essentially seeks to find minimal rules, but we can rarely find such quadrature rules. However, our generated quadrature rule sizes are only modestly larger than the optimal $\mathcal{L}(\Lambda)$.

\subsection{Quadrature Stability}\label{S2_3}

Gaussian quadrature rules defined by Theorem \ref{TH2_1} can be computed via linear algebra, but multivariate quadrature rules defined by \eqref{PRO2_0} have no known analogous computational simplification. In order to solve this nonlinear system of equations we utilize Newton's method. We therefore expect that \eqref{PRO2_0} is not exactly satisfied by the computed solution, or it is satisfied to within some tolerance.

Fixing a downward-closed index set $\Lambda$ with size $M = |\Lambda|$, consider the matrix $\bs{X} \in \R^{d \times n}$ whose $n$ columns are the samples $\bs{x}_j$, and let $\bs{w} \in \R^n$ be a vector containing the $n$ weights. Let $\bs{V}(\bs{X}) \in \R^{n \times M}$ denote the Vandermonde-like matrix with entries
\begin{align}\label{eq:Vandermonde-def}
  \left(V \right)_{k,j} &= \pi_{\bs{\alpha}(k)}\left(\bs{x}_j\right), & j = 1, \ldots, n, &\; k=1, \ldots, M,
\end{align}
where we have introduced an ordering $\bs{\alpha}(1), \ldots \bs{\alpha}(m)$ on the elements of $\Lambda$. We assume $\bs{\alpha}(1) = \bs{0}$, but the remaining ordering of elements is irrelevant. The system \eqref{PRO2_0} can then be written as
\begin{align*}
  \bs{V}\left(\bs{X}\right) \bs{w} = \bs{e}_1/\pi_{\bs{0}},
\end{align*}
where $\bs{e}_1 = (1, 0, 0, \ldots, 0)^T \in \R^M$ is a cardinal unit vector. Instead of achieving the equality above, our computational solver computes an approximate solution $\left(\bs{X},\bs{w}\right)$ to the above system, satisfying
\begin{equation}\label{compute_GQ2_0}
  \left\| \bs{V}\left(\bs{X}\right) \bs{w} - \bs{e}_1/\pi_{\bs{0}} \right\|_2 = \epsilon \geq 0.
\end{equation}
Our next result quantifies the effect of the residual $\epsilon$ on the accuracy of the designed quadrature rule. \annote{To prove this result, we require the additional assumption that the quadrature weights are positive, which is enforced in our computations.}

\begin{proposition}\label{PR2_2}
  Let $\omega(\bs{x})$ be a probability density function on $\Gamma$, and let $\Lambda$ be any multi-index set containing $\bs{0}$ (i.e., $\Pi_\Lambda$ contains constant functions). Assume that $(\bs{X}, \bs{w})$ satisfies
  \eqref{compute_GQ2_0} with some $\epsilon \geq 0$, and assume the weights are all positive. Then for any $f \in L^2_\omega(\Gamma)$,
  \begin{align}\label{eq:quadrature-estimate}
    \left| \int f(\bs{x}) \omega(\bs{x}) \dx{\bs{x}} - \sum_{q=1}^n w_q f(\bs{x}_q) \right| \leq \epsilon \left\| f\right\| + \max_{j=1,\ldots n} \left| f(\bs{x}_j) - p(\bs{x}_j) \right|,
  \end{align}
  where $p \in \Pi_\Lambda$ is the $L^2_\omega(\Gamma)$-orthogonal projection of $f$ onto $\Pi_\Lambda$.
\end{proposition}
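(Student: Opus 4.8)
The plan is to split the quadrature error using the $L^2_\omega(\Gamma)$-orthogonal projection $p \in \Pi_\Lambda$ of $f$. Writing $f = p + (f-p)$ and abbreviating the quadrature functional by $Q(g) \coloneqq \sum_{q=1}^n w_q g(\bs{x}_q)$, linearity of both $I$ and $Q$ gives
\[ I(f) - Q(f) = \bigl[ I(p) - Q(p) \bigr] + \bigl[ I(f-p) - Q(f-p) \bigr]. \]
The first thing I would establish is that the integral of the remainder vanishes: since $\bs{0} \in \Lambda$ the constant $\pi_{\bs{0}} \in \Pi_\Lambda$, so orthogonality of the projection residual yields $(f-p,\pi_{\bs{0}}) = 0$, and because $\pi_{\bs{0}}$ is a nonzero constant this forces $\int (f-p)\,\omega = 0$, i.e. $I(f-p) = 0$. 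Hence the error reduces to $[I(p)-Q(p)] - Q(f-p)$, and I would bound the two pieces separately before recombining with the triangle inequality.

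For the polynomial part I would expand $p = \sum_{k=1}^{M} \hat{p}_k \pi_{\bs{\alpha}(k)}$ in the orthonormal basis. Orthonormality together with the probability-density normalization gives $I(p) = \hat{p}_1/\pi_{\bs{0}} = \hat{\bs{p}}^{T}\bs{e}_1/\pi_{\bs{0}}$, while the definition of $\bs{V}(\bs{X})$ in \eqref{eq:Vandermonde-def} gives $Q(p) = \hat{\bs{p}}^{T}\bs{V}(\bs{X})\bs{w}$. Subtracting and applying Cauchy--Schwarz,
\[ \bigl| I(p) - Q(p) \bigr| = \bigl| \hat{\bs{p}}^{T}(\bs{e}_1/\pi_{\bs{0}} - \bs{V}(\bs{X})\bs{w}) \bigr| \le \| \hat{\bs{p}} \|_2 \, \bigl\| \bs{V}(\bs{X})\bs{w} - \bs{e}_1/\pi_{\bs{0}} \bigr\|_2 = \epsilon\,\| \hat{\bs{p}} \|_2, \]
where the last equality is exactly the residual condition \eqref{compute_GQ2_0}. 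Orthonormality identifies $\| \hat{\bs{p}} \|_2 = \| p \|$, and since $p$ is the orthogonal projection of $f$ we have $\| p \| \le \| f \|$, producing the $\epsilon\|f\|$ term.

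For the remainder I would use the hypothesis that the weights are positive, which lets the quadrature functional be bounded by its absolute mass:
\[ \bigl| Q(f-p) \bigr| \le \sum_{q=1}^{n} w_q \bigl| f(\bs{x}_q) - p(\bs{x}_q) \bigr| \le \Bigl( \max_{j=1,\dots,n} \bigl| f(\bs{x}_j) - p(\bs{x}_j) \bigr| \Bigr) \sum_{q=1}^{n} w_q. \]
I expect the one delicate point to be controlling $\sum_{q} w_q$. Note $\sum_q w_q$ is $(\bs{V}(\bs{X})\bs{w})_1/\pi_{\bs{0}}$, whose exact value is $1$ precisely because $\omega$ is a probability density (this is where that hypothesis is needed, alongside weight positivity). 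The residual condition only pins this first component to within $\epsilon$ of its target, so strictly one gets $\sum_q w_q \le 1 + \epsilon$; I would either invoke exact matching of the zeroth moment so that $\sum_q w_q = 1$ exactly, or absorb the resulting higher-order term $\epsilon\max_j|f(\bs{x}_j)-p(\bs{x}_j)|$, to land on the clean factor of one in \eqref{eq:quadrature-estimate}. Combining the two bounds via the triangle inequality then yields the stated estimate.
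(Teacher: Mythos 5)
Your proof is correct and takes essentially the same route as the paper's: choose $p$ as the $L^2_\omega(\Gamma)$-projection, use $\bs{0}\in\Lambda$ so that the integral of $f-p$ vanishes, bound the polynomial part by Cauchy--Schwarz against the residual vector together with $\|p\|\le\|f\|$ (Bessel), and bound the remainder at the nodes using positivity of the weights. Your flagged ``delicate point'' is in fact a spot where you are more careful than the paper, which simply asserts $\sum_{q} w_q = \int_\Gamma \omega(\bs{x})\,\dx{\bs{x}} = 1$ even though \eqref{compute_GQ2_0} only pins the zeroth moment to within $\epsilon$; your fix (exact zeroth-moment matching, or absorbing the resulting $\epsilon\max_j|f(\bs{x}_j)-p(\bs{x}_j)|$ term) is the honest way to recover the stated clean bound.
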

This result does apply to all our computed designed quadrature rules since we enforce positivity of the weights. \annote{It is not applicable to other polynomial-based rules where weights can be negative, such as sparse grids.}
\begin{proof}
  For an arbitrary $p \in \Pi_\Lambda$, the following holds
  \begin{align}\label{eq:p-expansion}
    p(\bs{x}) &= \sum_{\bs{\alpha} \in \mathcal{I}} p_{\bs{\alpha}} \pi_{\bs{\alpha}}(\bs{x}), & p_{\bs{\alpha}} &= \left(p, \pi_{\bs{\alpha}}\right),
  \end{align}
  and thus $\|p\|^2 = (p,p) = \sum_{\bs{\alpha} \in \Lambda} p_{\bs{\alpha}}^2$.
  We have:
  \begin{align}\label{eq:quadbound-decomposition}
    \left| \int_{\Gamma} f(\bs{x}) \omega(\bs{x}) \dx{\bs{x}} - \sum_{q=1}^n w_q f\left(\bs{x}_q\right) \right| &\leq \underbrace{\left| \int_{\Gamma} ( f(\bs{x}) - p(\bs{x}) ) \omega(\bs{x}) \dx{\bs{x}}\right|}_{(a)} + \underbrace{\left| \sum_{q=1}^n \left(p(\bs{x}_q)
    - f(\bs{x}_q)\right) w_q \right|}_{(b)} \\\nonumber
    &+\underbrace{\left| \int_\Gamma p(\bs{x}) \omega(\bs{x}) \dx{\bs{x}} - \sum_{q=1}^n w_q p(\bs{x}_q) \right|}_{\textrm{(c)}}
  \end{align}
  We now choose $p$ as the $L^2_\omega(\Gamma)$-orthogonal projection of $f$ into $\Pi_\Lambda$:
  \begin{align}\label{eq:f-orthogonal-projection}
    p = \argmin_{q \in \Pi_\Lambda} \left\| f - q \right\| \;\; \Longrightarrow \int_\Gamma \left[ f(\bs x) - p(\bs x) \right] \phi(\bs x) \omega(\bs x) \dx{\bs x} = 0 \;\;\; \forall \; \phi \in \Pi_\Lambda.
  \end{align}
  Since $\bs{0} \in \Lambda$, the above holds in particular for $\phi(\bs x) \equiv 1$ so that
  \begin{align*}
    \textrm{(a)} &=  \left| \int_{\Gamma}  \left(f(\bs{x}) - p(\bs{x})\right)  \omega(\bs{x}) \dx{\bs{x}} \right| = 0
  \end{align*}
  Term (b) can be bounded as
  \begin{align*}
    \textrm{(b)} &\leq \sum_{q=1}^n |w_q| \left| p(\bs{x}_q) - f(\bs{x}_q) \right| \leq \max_{q=1,\ldots, n} \left| p(\bs{x}_q) - f(\bs{x}_q) \right|,
  \end{align*}
  where the last inequality uses the fact that $\sum_{q=1}^N |w_q| = \sum_{q=1}^N w_q  = \int_\Gamma \omega(\bs{x}) \dx{\bs{x}} = 1$ since the weights are positive and $\omega$ is a probability density. Finally, term (c) can be bounded as follows: Since $p \in \Pi_\Lambda$ then by \eqref{eq:p-expansion},
  \begin{align*}
  \sum_{q=1}^n w_q p(\bs{x}_q) = \sum_{q=1}^n \sum_{\bs{\alpha} \in \Lambda} w_q p_{\bs{\alpha}} \pi_{\bs{\alpha}}(\bs{x}_q) = \sum_{\bs{\alpha} \in \Lambda} p_{\bs{\alpha}} \left( \sum_{q = 1}^n w_q \pi_{\bs{\alpha}}(\bs{x}_q)\right)
  \end{align*}
  The term in parenthesis on the right-hand side is an entry in the vector $\bs{V}(\bs{X}) \bs{w}$ from the relation \eqref{compute_GQ2_0}; note also that $\widehat{\pi}_{\bs{\alpha}}$ cf. Equation~\eqref{eq:discrete-quadrature} equals an entry in the vector $\bs{b}$. Therefore, combining the above equation and using the Cauchy-Schwarz inequality:
  \begin{align*}
    \textrm{(c)} &= \left| \int_\Gamma p(\bs{x}) \omega(\bs{x}) \dx{\bs{x}} - \sum_{q=1}^n w_q p(\bs{x}_q) \right| = \left| \sum_{\bs{\alpha} \in \Lambda} p_{\bs{\alpha}} \left( \int_\Gamma \pi_{\bs{\alpha}}(\bs{x}) \omega(\bs{x}) \dx{\bs{x}} -  \sum_{q=1}^n w_q \pi_{\bs{\alpha}}(\bs{x}_q) \right) \right| \\
                 &= \left| \sum_{\bs{\alpha} \in \Lambda} p_{\bs{\alpha}} \left( \delta_{\bs{\alpha},\bs{0}}/\pi_{\bs{0}} -  \sum_{q=1}^n w_q \pi_{\bs{\alpha}}(\bs{x}_q) \right) \right|
    \leq \sqrt{\sum_{\bs{\alpha} \in \Lambda} p_{\bs{\alpha}}^2} \left\| \bs{V}(\bs{X}) \bs{w} - \bs{e}_1/\pi_{\bs{0}} \right\| \leq \epsilon \left\| p \right\| \leq \epsilon \|f \|,
  \end{align*}
  where the final inequality is Bessel's inequality, which holds since we have chosen $p$ as in \eqref{eq:f-orthogonal-projection}. Combining our estimates for terms (a), (b), and (c) in \eqref{eq:quadbound-decomposition} completes the proof.
\end{proof}

Relative to \annote{the pointwise error committed by} best $L^2_\omega(\Gamma)$ approximations, the estimate provided by Proposition \ref{PR2_2} bounds the quadrature error in terms of the quantity $\epsilon$, which is explicitly computable given a quadrature rule.

\subsection{A popular alternative: Sparse Grids}

A (Smolyak) sparse grid is a structured point configuration in multiple dimensions, formed from unions of tensorized univariate rules. Quadrature weights often accompany points in a sparse grid. We briefly describe sparse grids for polynomial integration in this section; they will be used for comparison in our numerical results section.

Consider a tensorial $\Gamma$ as in Section \ref{sec:multivariate-polynomials}, and for simplicity assume that the univariate domains $\Gamma_j = \Gamma_1$ are the same, and that the univariate weights $\omega_j = \omega_1$ are the same. Let $\mathbb{X}_{i}$ denote a univariate quadrature rule (nodes and weights) of ``level" $i \geq 1$, and define $\mathbb{X}_0 = \emptyset$. The number of points $n_i$ in the quadrature rule $\mathbb{X}_i$ is increasing with $i$, but can be freely chosen. For multi-index $\bs{i} \in \N^d$, a $d$-variate tensorial rule and its corresponding weights are
\begin{equation}
\label{SP1} \displaystyle \mathbb{A}_{d,\bm i}= \mathbb{X}_{i_1} \otimes \ldots \otimes \mathbb{X}_{i_d}, \quad \displaystyle w^{(\bm q)}= \prod_{r=1}^d w_{i_r}^{(q_r)}
\end{equation}
The univariate difference operator between sequential levels is written as
\begin{align}\label{SP2}
  \Delta_i & = \mathbb{X}_{i} - \mathbb{X}_{i-1}, & i &\geq 1,
\end{align}
and for any $k \in \N$, this approximation difference can be used to construct a $d$-variate, level-$k$-accurate sparse grid operator \cite{Bungartz04,Smol63},
\begin{align}\label{SP3}
  \mathbb{A}_{d,k} = \sum_{r=0}^{k-1} \sum_{\substack{\bs{i} \in \N^d \\\left| \bs{i} \right| = d+r}} \Delta_{i_1} \otimes \ldots \otimes \Delta_{i_d}
   = \sum_{r=k-d}^{k-1} (-1)^{k-1-r}  \binom{d-1}{k-1-r} \displaystyle \sum_{\substack{\bs{i} \in \N^d \\\left| \bs{i} \right| = d+r}} \mathbb{X}_{i_1} \otimes \ldots
\otimes \mathbb{X}_{i_d},
\end{align}
where the latter equality is shown in \cite{Wasilkowski95}. If the univariate quadrature rule $\mathbb{X}_i$
exactly integrates univariate polynomials of order $2i-1$ or less, then the Smolyak rule $\mathbb{A}_{d,k}$ is exact for $d$-variate polynomials of total order $2k-1$ \cite{Heiss08}. One is tempted to use Gauss quadrature rules for the $\mathbb{X}_i$ to obtain optimal efficiency, but since the differences $\Delta_i$ appear in the Smolyak construction, then instead utilizing nested univariate rules can generate sparse grids with many fewer nodes than non-nested constructions. One can use, for example, nested Clenshaw-Curtis rules \cite{xiu_high-order_2005}, the nested Gauss-Patterson or Gauss-Kronrod rules \cite{gerstner_numerical_1998,liu_adaptive_2011,Patterson68}, or Leja sequences \cite{narayan_adaptive_2014}.

Sparse grids have been used with great success in many modern applications, and thus are a good candidate for comparison against our approach of designed quadrature. However, sparse grids that integrate polynomials in a certain multi-index set use far more points than the minimum number prescribed by Theorem \ref{thm:half-set} \annote{(see Figure \ref{fig_2_0} for an empirical comparison)}, and frequently produce quadrature rules with negative weights. Our results in Section \ref{sec:results} show that designed quadrature uses many fewer points than sparse grids for a given accuracy level, and guarantees positive quadrature weights.

\section{Computational Framework}\label{S3}

Our procedure aims to compute nodes $\bs{X} = \left\{ \bs{x}_1, \ldots, \bs{x}_n \right\}\in \Gamma^n$
and positive weights $\bs{w} \in (0, \infty)^n$ that enforce equality in \eqref{PRO2_0}. A direct formulation of \eqref{PRO2_0} is

\begin{equation}\label{S3_1_0}
\begin{array}{r l}
  \bm R(\bm d) = \bm V(\bm X) \bm w - \bs{e}_1/\pi_{\bs{0}} = \bm 0, & \\
  \bm x_j \in \Gamma, & j = 1, \ldots, n\\
  w_j > \bm 0, & j = 1, \ldots n
\end{array}
\end{equation}
where $\bm d = (\bm X, \bm w)$ are the decision variables. Instead of directly solving this constrained root finding problem, we introduce a closely related constrained optimization problem
\begin{equation}\label{S3_3_3}
\begin{array}{r l l}
  \displaystyle \mathop{\min}_{\bm X, \bm w} & \displaystyle ||\bm R||_2  &  \\
  \text{subject to} & \bm x_j \in \Gamma, & j=1, \ldots, n \\
                    & w_j> \bm 0, & j =1, \ldots, n
\end{array}
\end{equation}
Clearly a solution to \eqref{S3_1_0} also solves \eqref{S3_3_3}, but the reverse is not necessarily true. \annote{We compute solutions to \eqref{S3_3_3}, and when these solutions exhibit large nonzero values of $\|\bs{R}\|$, we increase the quadrature rule size $n$ and repeat. Using this strategy, we empirically find that for a specified $\epsilon$ we can satisfy $\|\bs{R}\| \leq \epsilon$ in all situations we have tried. Thus, our approach solves a relaxed version of \eqref{S3_1_0} via repeated applications of \eqref{S3_3_3}.} Our computational approach to solve \eqref{S3_3_3} requires four major ingredients, each of which are described in the subsequent sections:
\begin{enumerate}[leftmargin=1in]
  \item[Section \ref{S3_3} --] Penalization: objective augmentation, transforming constrained root finding into unconstrained minimization problem
  \item[Section \ref{sec:method-newton} --] Iteration: unconstrained minimization via the Gauss-Newton algorithm
  \item[Section \ref{S3_4} --] Regularization: numerical regularization to address ill-conditioned Gauss-Newton update steps
  \item[Section \ref{S3_2} --] Initialization: specification of an initial guess
\end{enumerate}
\annote{We highlight above that regularization is required for our optimization. The objective $\bs{R}$ in \eqref{S3_3_3} is highly ill-conditioned as a function of the decision variables. Without regularization, the update steps specified by the Gauss-Newton algorithm generally do not result in convergence. However, with the regularization, we have found that our optimization results in steps with decreasing residual. These observations can be corroborated by the numerical results in Section \ref{sec:results}, and in particular Table~\ref{tab:iterations} that lists CPU time and iterations required for computing 4-dimensional rules.

  Since our algorithm only minimizes the norm of $\bs{R}$, the quadrature rule we compute is not guaranteed to integrate any polynomials exactly, only up to some tolerance parameter $\epsilon \geq \|\bs{R}\|$. This is the utility of Proposition \ref{PR2_2}: if our optimization algorithm terminates with a particular value of $\epsilon$, we have a quantitative understanding of how $\epsilon$ affects the quality of the quadrature rule relative to best $L^2$-approximating polynomials.

  Since we produce a quadrature rule that is only $\epsilon$-exact, there may be many quadrature rules that achieve this tolerance. In particular, our algorithm is not guaranteed to produce optimal quadrature rules, but in comparison with some other tabulated rules from \cite{Stroud67,stroud_numerical_1960,Xiao10,xiu_numerical_2008}, we find that our nodal counts are no greater than in those references. There is one lone exception for integrating degree-8 polynomials in three dimensions, where we find a rule with one point greater than reported in \cite{Xiao10}. Details are in Section \ref{sec:results-comparison} and in Table \ref{tab:comparison}.

  Finally, our algorithm is subject to the same limitations as many other minimization algorithms: it may only find a local minimum of the objective, and not a global minimum.
}

\subsection{Penalization}\label{S3_3}

Penalty methods are techniques to solve constrained optimization problems such as~\eqref{S3_3_3}. Penalty methods augment the objective with a high cost for constraint violated, and subsequently solve an unconstrained optimization problem on the augmented objective.

We use a popular penalty function, the non-negative and smooth quadratic function. For example in $d=1$ dimensions on $\Gamma = [-1,1]$ with an $n$-point quadrature rule, the constraints and corresponding penalties $P_j$, $j=1, \ldots, (d+1)n = 2n$ as a function of the $2n$ decision variables $\bs{d} = \left(\bs{X}, \bs{w}\right)$ can be expressed as
\begin{align*}
  -1 \leq x_j \leq 1 \hskip 5pt &\Longrightarrow \hskip 5pt P_j\left(\bs{d}\right) = \left(\max[0,x_j-1,-1-x_j]\right)^2,\\
  w_j \geq 0 \hskip 5pt &\Longrightarrow \hskip 5pt P_{n+j}\left(\bs{d}\right) = \left(\max[0,-w_j]\right)^2,
\end{align*}
for $j = 1, \ldots, n$. The total penalty associated with the constraints is then
\begin{align*}
  P^2\left(\bs{d}\right) = \sum_{j=1}^{(d+1)n} P^2_j\left(\bs{d}\right).
\end{align*}

A penalty function approach to solve the constrained problem \eqref{S3_3_3} uses a sequence of unconstrained problems indexed by $k \in \N$ having objective functions
\begin{align}\label{eq:g-def}
  g\left(c_k, \bs{d} \right) \coloneqq \left\|\widetilde{\bs{R}}_k \right\|^2_2 = \left\| \bs{R}\right\|^2_2 + c^2_k P^2\left(\bs{d}\right),
\end{align}
where we have defined the vector
\begin{align*}
  \widetilde{\bs{R}}_k &= \left[
    \begin{array}{c}
      \bs{R} \\
      c_k P_1 \\
      c_k P_2 \\
      \vdots \\
      c_k P_{(d+1)n}
    \end{array}
  \right].
\end{align*}
The positive constants $c_k$ are monotonically increasing with $k$, i.e., $c_{k+1}> c_k$. Each unconstrained optimization yields an updated solution point $\bm d^{k}$, and as $c_k \rightarrow \infty$ the solution point of the unconstrained problem will converge to the solution of constrained problem. The following lemma, adopted from~\cite{Luenberger08}, is used to show convergence of the penalty method.
\begin{lemma}\label{TH3_3_1}
Let $\bm d^{k}$ be the minimizer for $g(c_k, \cdot)$ and $c_{k+1}> c_k$. Then:
\begin{align*}
  g(c_k, \bm d^k) &\leq g(c_{k+1}, \bm d^{k+1}), &
  P(\bm d^{k}) &\geq P(\bm d^{k+1}), &
  ||\bm R(\bm d^{k})|| &\leq ||\bm R(\bm d^{k+1})||.
\end{align*}
Furthermore, let $\bm d^{*}$ be a solution to problem~\eqref{S3_3_3}. Then for each $k$,
\begin{equation*}
||\bm R(\bm d^k)|| \leq g(c_k, \bm d^k) \leq ||\bm R(\bm d^*)||.
\end{equation*}
\end{lemma}

The above lemma denotes that the sequence of $g(c_k,\bm d^{k})$ is nondecreasing and bounded above by the optimal objective value of the constrained optimization problem. The following theorem establishes the global convergence of the penalty method. More precisely
it verifies that any limit point of the sequence is a solution to $\eqref{S3_3_3}$.
\begin{theorem}[\cite{Luenberger08}]\label{TH3_3_3}
  Let $\{\bm d^k\},~k \in \N$ be a sequence of minimizers of \eqref{eq:g-def}. Then any limit point of the sequence is a solution to problem~\eqref{S3_3_3} i.e. $\lim_{k \in \N} P(\bm d^{k})=0$ and $ \lim_{k \in \N} ||R(\bm d^k)|| \leq ||R(\bm d^*)||$.
\end{theorem}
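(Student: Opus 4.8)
The plan is to follow the classical penalty-method convergence argument from \cite{Luenberger08}, leaning heavily on Lemma~\ref{TH3_3_1} and on the additional standing hypothesis that $c_k \to \infty$ (monotone increase $c_{k+1} > c_k$ alone is not enough; the penalty weights must actually diverge). I would split the conclusion into its two constituent claims: that the residuals $\|\bs{R}(\bs{d}^k)\|$ stay below the optimal value in the limit, and that the constraint violation $P(\bs{d}^k)$ vanishes. The limit-point formulation then follows from these by continuity.

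First I would dispatch the residual bound, which is essentially immediate from Lemma~\ref{TH3_3_1}. That lemma gives $\|\bs{R}(\bs{d}^k)\| \le g(c_k,\bs{d}^k) \le \|\bs{R}(\bs{d}^*)\|$ for every $k$, together with monotonicity of $k \mapsto \|\bs{R}(\bs{d}^k)\|$. A nondecreasing sequence bounded above by the fixed constant $\|\bs{R}(\bs{d}^*)\|$ converges, and its limit cannot exceed that bound; hence $\lim_k \|\bs{R}(\bs{d}^k)\| \le \|\bs{R}(\bs{d}^*)\|$.

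The crux is showing $P(\bs{d}^k)\to 0$, and here divergence of $c_k$ does the real work. Using the definition \eqref{eq:g-def}, I would rearrange to isolate the penalty term,
\[
  c_k^2 P^2(\bs{d}^k) = g(c_k,\bs{d}^k) - \|\bs{R}(\bs{d}^k)\|_2^2.
\]
The right-hand side is bounded above uniformly in $k$: $g(c_k,\bs{d}^k)\le \|\bs{R}(\bs{d}^*)\|$ by the lemma, while $\|\bs{R}(\bs{d}^k)\|_2^2 \ge 0$. Writing $C := \|\bs{R}(\bs{d}^*)\|$, we obtain $P^2(\bs{d}^k) \le C/c_k^2$, and since $c_k\to\infty$ the right side tends to $0$. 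As $P\ge 0$, this forces $P(\bs{d}^k)\to 0$; equivalently, the monotone limit $p^\ast := \lim_k P(\bs{d}^k)$ guaranteed by the lemma must equal $0$, since otherwise $c_k^2 P^2(\bs{d}^k)\to\infty$, contradicting the uniform bound.

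Finally I would translate these two sequence-level statements into the claim about limit points. If $\bar{\bs{d}}$ is any limit point, I extract a subsequence $\bs{d}^{k_i}\to\bar{\bs{d}}$; continuity of the penalty $P$ (a finite sum of squared, truncated-linear, hence continuous, functions) and of the polynomial residual map $\bs{d}\mapsto \bs{R}(\bs{d})$ yields $P(\bar{\bs{d}})=\lim_i P(\bs{d}^{k_i})=0$ and $\|\bs{R}(\bar{\bs{d}})\| = \lim_i \|\bs{R}(\bs{d}^{k_i})\|\le\|\bs{R}(\bs{d}^*)\|$. Thus $\bar{\bs{d}}$ is feasible and attains an objective no larger than the optimum, so it solves \eqref{S3_3_3}. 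I expect the only genuine subtlety to be the $P(\bs{d}^k)\to 0$ step: it is tempting to read it off from monotonicity of $P$ alone, but ruling out a strictly positive limit is exactly what requires $c_k\to\infty$, and this should be flagged as a hypothesis rather than presented as a consequence of $c_{k+1}>c_k$.
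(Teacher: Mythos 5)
Your proposal is correct and follows essentially the same route as the paper, which states this theorem with a citation to Luenberger--Ye and relies on exactly the classical argument you reconstruct: the residual bound read off from Lemma~\ref{TH3_3_1}, the rearrangement $c_k^2 P^2(\bs{d}^k) = g(c_k,\bs{d}^k) - \|\bs{R}(\bs{d}^k)\|_2^2$ combined with $c_k \to \infty$ to force $P(\bs{d}^k) \to 0$, and continuity of $P$ and $\bs{R}$ to transfer feasibility and optimality to any limit point. Your flag that divergence $c_k \to \infty$ must be taken as a hypothesis (monotonicity $c_{k+1} > c_k$ alone is insufficient) is accurate and consistent with the paper's surrounding discussion, which explicitly invokes ``as $c_k \rightarrow \infty$''.
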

The above theorem shows both that a limit point denoted by $\bar{\bm d}$ is a feasible solution since $P(\bar{\bm d})=0$, and that it is optimal since $||\bm R(\bar{\bm d})||^2_2 \leq ||\bm R(\bm d^*)||^2_2$.

We can now formulate an unconstrained minimization problem with sequence of increasing $c_k$ on the objectives $g$ in \eqref{eq:g-def} for the decision variables $\bs{d} = (\bs{X}, \bs{w})$,
\begin{align}\label{eq:unconstrained-g}
  \displaystyle \mathop{\min}_{\bs{d}} \displaystyle g(c_k, \bs{d})
\end{align}
which replace the constrained root-finding problem \eqref{S3_1_0}.

It remains for us to specify how the constants $c_k$ are chosen: if $\bs{d}$ is the current iterate for the decision variables, we use the formula
\begin{align*}
  c_k = \max \left\{A, \frac{1}{||\bs{R}(\bm d)||_2} \right\},
\end{align*}
where $A$ is a tunable parameter that is meant to be large. We use $A = 10^3$ in our simulations. Also note that we never have $c_k = \infty$ so that our iterations cannot exactly constrain the computed solution to lie in the feasible set. To address this in practice we reformulate constraints to have non-zero penalty within a small radius inside the feasible set. For example, instead of enforcing $w_j > 0$, we enforce $w_j > 10^{-6}$.

Note that one may also consider barrier/interior point methods to enforce constraints; however, in our algorithm we find that penalty methods are more suitable in transforming the constrained root finding problem to an unconstrained minimization problem.

\subsection{The Gauss-Newton algorithm}\label{sec:method-newton}
Having transformed the constrained problem \eqref{S3_3_3} into a sequence of unconstrained problems \eqref{eq:unconstrained-g}, we can now use standard unconstrained optimization tools.

Two popular approaches for unconstrained optimization are gradient descent and Newton's method. Both approaches in the context of our minimization require the Jacobian of the objective function with respect to the decision variables. We define

\begin{align}\label{S3_3_6}
  \widetilde{\bs{J}}_k = \pfpx{\widetilde{\bs{R}}_k}{\bs{d}} &= \left[
    \begin{array}{c}
      \bs{J} \\ 
      c_k \partial P_1 /\partial \bm d \\
      c_k \partial P_2 / \partial \bm d  \\
      \vdots \\
      c_k \partial P_{(d+1)n} / \partial \bm d
    \end{array}
  \right], &
  \bs{J}(\bs{d}) &\coloneqq \pfpx{\bs{R}}{\bs{d}} \in \R^{M \times (d+1)n},
\end{align}
where $\pfpx{P_j}{\bs{d}} \in \R^{1 \times (d+1)n}$ is the Jacobian of $P_j$ with respect to the decision variables. With use of our quadratic penalty function, these penalty Jacobians are Lipschitz continuous in the decision variables, and easily evaluated since they are quadratic functions. The matrix $\bs{J}$ has entries
\begin{align}\label{eq:jacobian-entries}
  (J)_{m, (i-1)d + j} &= \pfpx{\pi_{\bs{\alpha}(m)}\left(\bs{x}_i\right)}{x_i^{(j)}} w_i,  &
  (J)_{m, n d + i} &= \pi_{\bs{\alpha}(m)}\left(\bs{x}_j\right),
\end{align}
for $m = 1, \ldots, M$, $i = 1, \ldots, n$, and $j = 1, \ldots, d$. Above, we define $\pi_{\bs{\alpha}(m)}$ as in \eqref{eq:Vandermonde-def}. Computing entries of the Jacobian matrix $\bs{J}$ is straightforward: Assuming the basis $\bs{\pi}_{\bs{\alpha}}$ is of tensor-product form, (see Section \ref{sec:multivariate-polynomials}) then we need only compute derivatives of univariate polynomials. A manipulation of the three-term recurrence relation \eqref{eq:three-term-recurrence} yields the recurrence
\begin{align*}
  \sqrt{b_{m+1}} p_{m+1}'(x) = (x - a_m) p_m'(x) - \sqrt{b}_m p_{m-1}'(x) + p_m(x).
\end{align*}
The partial derivatives in $\bs{J}$ may be evaluated using the relation above along with \eqref{eq:tensorial-pi}.

We index iterations with $k$, which is the same $k$ as that defining the sequence of unconstrained problems \eqref{eq:unconstrained-g}. Thus, our choice of $c_k$ changes at each iteration. Gradient descent proceeds via iteration of the form
\begin{align*}
  \bm d^{k+1} &= \bm d^{k} - \alpha \pfpx{\|\widetilde{\bs{R}}_k\|_2}{\bs{d}}, & \pfpx{\|\widetilde{\bs{R}}_k\|_2}{\bs{d}} = \frac{\widetilde{\bs{J}}_k^T \widetilde{\bs{R}}}{\|\widetilde{\bs{R}}_k\|_2},
\end{align*}
with $\alpha$ a customizable step length that is frequently optimized via, e.g., a line-search algorithm. In contrast, a variant of Newton's root finding method applied to rectangular systems is the Gauss-Newton method \cite{Stoer2002}, having update iteration
\begin{align}\label{eq:gauss-newton}
  \bm d^{k+1} &= \bm d^{k} - \Delta \bs{d}, & \Delta \bs{d} &= \left( \widetilde{\bs{J}}_k^T \widetilde{\bs{J}}_k \right)^{-1} \widetilde{\bs{J}}_k^T \widetilde{\bs{R}}_k,
\end{align}
where both $\widetilde{\bs{J}}_k$ and $\widetilde{\bs{R}}_k$ are evaluated at $\bs{d}^k$. The iteration above reduces to the standard Newton's method when the system is square, i.e., $M = n(d+1)$. Newton's method converges quadratically to a local solution for a sufficiently close initial guess $\bs{d}^0$ versus the gradient descent which has linear convergence \cite{Bertsekas08}. We find that Gauss-Newton iterations are robust for our problem.

Assuming an initial guess $\bs{d}^0$ is given, we can repeatedly apply the Gauss-Newton iteration \eqref{eq:gauss-newton} until a stopping criterion is met. We terminate our iterations when the residual norm falls below a user-defined threshold $\epsilon$ i.e. $||\widetilde{\bm R}||_2 < \epsilon$.

A useful quantity to monitor during the iteration process is the magnitude of the Newton decrement, which often reflects quantitative proximity to the optimal point \cite{Boyd04}. In its original form, the Newton decrement is the norm of the Newton step in the quadratic norm defined by the Hessian. I.e., for optimizing $f(\bs{x})$, the Newton decrement norm is $|| \Delta \bm d||_{\nabla^2 f(\bm x)} = (\Delta \bm d^T \nabla^2 f(\bm x) \Delta \bm d  )^{1/2}$, where $\nabla^2 f$ is the Hessian of $f$. In our minimization procedure with non-squared systems we use
\begin{equation}\label{S3_1_6}
  \eta  = \big(\Delta \bm d^T (\widetilde{\bs{J}}_k^T \widetilde{\bs{R}}_k)\big)^{1/2}.
\end{equation}
as a surrogate for a Hessian-based Newton decrement which decreases as $\bm d \rightarrow \bm d^{*}$.

Finally we note that, for a given quadrature rule size $n$, we cannot guarantee that a solution to \eqref{S3_1_0} exists. In this case our Gauss-Newton iterations will exhibit residual norms stagnating at some positive value while the Newton decrement is almost zero. \annote{When this occurs, we re-initialize the decision variables and enrich the current set of decision variables with additional nodes and weights and continue the optimization procedure. This procedure of gradually increasing the number of nodes and weights is described more in Section \ref{S3_2}.}

\subsection{Regularization}\label{S3_4}
The critical part of our minimization scheme is the evaluation of Newton step \eqref{eq:gauss-newton}. For our rectangular system, this is the least-squares solution $\Delta \bs{d}$ to the linear system
\begin{align*}
  \widetilde{\bs{J}} \Delta \bs{d} = \widetilde{\bs{R}},
\end{align*}
where $\widetilde{\bs{J}} = \widetilde{\bs{J}}_k\left(\bs{d}^k\right)$, and $\widetilde{\bs{R}} = \widetilde{\bs{R}}_k\left(\bs{d}^k\right)$; in this section we omit explicit notational dependence on the iteration index $k$. The matrix $\widetilde{\bs{J}}$ is frequently ill-conditioned, which hinders a direct solve of the above least-squares problem. To address this we can consider a generic regularization of the above equality:

\begin{equation}\label{S3_4_1}
  \displaystyle \mathop{ \textrm{minimize}}_{\Delta \bm d} \quad ||\widetilde{\bs{J}} \Delta \bm d - \widetilde{\bs{R}} ||_p  \quad \textrm{subject to} \quad  || \Delta \bm d||_q < \tau,
\end{equation}
where $p$, $q$, and $\tau$ are free parameters. The trade off between the objective norm and solution norm is characterized as a Pareto curve and shown to be convex in~\cite{Vandenberg08,Vandenberg11} for generic norms $1 \leq (p,q) \leq \infty$. Exploiting this Pareto curve, the authors in~\cite{Vandenberg08,Vandenberg11} devise an efficient algorithm and implementation~\cite{Vandenberg_code} for computing the regularized solution when $p=2,q=1$. These values correspond to the LASSO problem \cite{tibshirani_regression_1996}, which promotes solution sparsity and subset selection.

Since sparsity is not our explicit goal, we opt for $p = q = 2$. This problem can be solved exactly \cite{golub_matrix_1996}, but at significant expense and the procedure lacks clear guidance on choosing $\tau$. We thus adopt an alternative approach. A penalized version of the $p=q=2$ optimization \eqref{S3_4_1} is Tikhonov regularization:
\begin{equation}\label{S3_4_2}
\Delta \bm d_{\lambda} = \textrm{argmin} \Big \{ ||\widetilde{\bs{J}} \Delta \bm d - \widetilde{\bs{R}} ||^2_2 + \lambda ||\Delta \bm d||_2^2  \Big \},
\end{equation}
where $\lambda$ is a regularization parameter that may be chosen by the user. This parameter has significant impact on the quality of the solution with respect to the original least-squares problem. Assuming that we have a definitive value for $\lambda$, then the solution to \eqref{S3_4_2} can be obtained via the singular value decomposition (SVD) of $\widetilde{\bs{J}}$. The SVD of matrix  $\widetilde{\bs{J}}_{\mathrm{N} \times \mathrm{M}}$ (for $\mathrm{N}<\mathrm{M}$) is given by
\begin{equation}
  \widetilde{\bs{J}} = \displaystyle \sum_{i=1}^{\mathrm{N}} \bm u_i \sigma_i \bm v^T_i.
\end{equation}
where $\sigma_i$ are singular values (in decreasing order), and $\bs{u}_i$ and $\bs{v}_k$ are the corresponding left- and right-singular vectors, respectively. The solution $\Delta \bm d_{\lambda}$ is then obtained as
\begin{equation}\label{S3_4_5}
  \Delta \bm d_{\lambda} = \displaystyle \sum_{i=1}^{\mathrm{N}} \rho_i   \displaystyle \frac{ \bm u^T_i \widetilde{\bs{R}}}{ \sigma_i} \bm v_i.
\end{equation}
where $\rho_i$ are Tikhonov filter factors denoted by
\begin{equation}\label{S3_4_4}
\begin{array}{l }
{\rho_i} =  \displaystyle \frac{\sigma_i^2}{\sigma_i^2+\lambda^2} \simeq \begin{cases}
1 \hspace{1.25cm} \sigma_i \gg \lambda  \\
\sigma_i^2/\lambda^2 \hspace{0.5cm}   \sigma_i \ll \lambda
\end{cases}
\end{array}
\end{equation}
Tikhonov regularization affects (or filters) singular values that are below the threshold $\lambda$. Therefore a suitable $\lambda$ is bounded by the extremal singular values of $\widetilde{\bs{J}}$. One approach to select $\lambda$ is via analysis of the ``$L$-curve" of singular values \cite{Hansen98,Hansen93}. The corner of $L$-curve can be interpreted as the point with maximum curvature; evaluation or approximation of the curvature with respect to singular value index can be used to find the index with maximum curvature, and the singular value corresponding to this index prescribes $\lambda$.

In practice, we evaluate the curvature of the singular value spectrum via finite differences on $\log(\sigma_i)$ (where the singular values are directly computed) and select the singular value that corresponds to the first spike in the spectrum. The regularization parameter can be updated after several, e.g., $30$, Gauss-Newton iterations. However, for small size problems, i.e., small dimension $d$ and $|\Lambda|$, a fixed appropriate $\lambda$ throughout the Gauss-Newton scheme also yields solutions.

Based on our numerical observations, adding a regularization parameter to all singular values and computing the regularized Newton step as $\Delta \bm d_{\lambda} = \sum_{i=1}^{\mathrm{N}}  [({ \bm u^T_i \widetilde{\bs{R}}})/({ \sigma_i + \lambda})] \bm v_i$ enhances the convergence when $\bm d $ is close to the root i.e. $||\widetilde{\bs{R}}||$ is small.

\subsection{Initialization }\label{S3_2}

The first step of the algorithm requires an initial guess $\bs{d}^0$ for nodes and weights; a particularly difficult aspect of this is the initial choice of quadrature rule size $n$. Our algorithm tests several values of quadrature rule sizes $n$ between an upper and lower bound; the determination of these bounds are described below.

With the multi-index set $\Lambda$ given, Theorem \ref{thm:half-set} provides a lower bound on the value of $n$, and this lower bound $\mathcal{L}(\Lambda)$ is the optimal size for a quadrature rule. We are unaware of sufficient conditions under which optimal quadrature rules exist. However, optimal-size quadrature rules have been shown in special cases, e.g., \cite{Stroud71}, for total degree spaces $\Lambda_{\mathcal{T}_k}$ with $k=2, 3, 5$. We have found that our algorithm is able to recover these optimal-sized rules in the previously-mentioned cases.

We formulate an upper bound on quadrature rule sizes based on a popular competitor: sparse grid constructions. The number of sparse grid points $|\mathbb{A}_{d,k}|$ required to satisfy \eqref{PRO2_0} with $\Lambda = \Lambda_{\mathcal{T}_k}$ can be estimated as $|\mathbb{A}_{d,k}| \approx \frac{(2d)^{k-1}}{(k-1)!}$ \cite{constantine_sparse_2012} for sparse grid constructions with non-nested univariate Gauss quadrature rules. Tabulation of the exact number of points for sparse grids constructed via univariate nested rules from the Hermite and Legendre systems is provided in~\cite{Qsparse}.

Our numerical results show that the number of designed quadrature nodes needed to satisfy \eqref{PRO2_0} is $n =\kappa |\mathbb{A}_{d,k}|$ where $\kappa \in [0.5,0.9]$ using $|\mathbb{A}_{d,k}|$ from~\cite{Qsparse}. We have found that an effective approach to choose the number of points is to perform a backtracking line-search procedure, which initializes $\kappa = 0.9$, solves the optimization problem, and gradually decreases $\kappa$ until the Gauss-Newton method does not converge to a desirable tolerance. Our strategy for eliminating nodes when $\kappa$ is decreased is to discard those with the smallest weights.

\annote{After the initial pass that generates $n$ nodes and weights achieving $\|\widetilde{\bs{R}}\| \leq \epsilon$, we attempt to remove nodes with smallest weights as described previously. However, this may cause the optimization to stagnate without achieving the desired tolerance. When this happens, we enrich the nodal set by gradually adding more nodes until we can achieve the tolerance. This process is repeated until the elimination and enrichment procedures result in no change of the quadrature rule size; see Algorithm \ref{alg:quad-design}, lines 9-18.}

Once an initial number of nodes $n$ is determined ($\kappa = 0.9$), that number of $d$-variate Monte Carlo samples or Latin Hypercube samples are generated as the initial nodes. This is easily done for the domain $\Gamma = [-1,1]^d$. Weights can be generated uniformly at random $[0,1]$ with $\sum_i w_i = |\Lambda|$ or set as a fixed value, e.g., $w_i=|\Lambda|/n$. We normalize the weights by $|\Lambda|$ in the numerical procedure to avoid very small weights. To accommodate for this, we can set $1/\pi_{\bs{0}} = |\Lambda|$ in \eqref{S3_1_0}, and after we obtain a solution we can re-normalize the weights based on the true value of $1/\pi_{\bs{0}}$.

On the domain $\Gamma = \R^d$, we are usually concerned with the weight $\omega(\bs{x}) = \exp(-\|\bs{x}\|_2^2)$. Monte Carlo samples can be generated as realizations of a standard normal random variable, and we transform Latin Hypercube samples on $[0,1]^d$ to $\R^d$ via inverse transform sampling corresponding to a standard normal random variable. (When $\Lambda$ contains polynomials of very high degree there are more sophisticated sampling methods that can produce better initial guesses \cite{Narayan17}.) We initialize the weights by setting $w_i=\exp(-||x_i||^2_2/2)$ and normalizing $w_i$ with respect to $|\Lambda|$ as described above.

Algorithm \ref{alg:quad-design} summarizes Sections \ref{S3_3}--\ref{S3_2}, including all the steps for our designed quadrature method.

\begin{algorithm}
\caption{Designed Quadrature}\label{alg:quad-design}
\label{alg1}
\begin{algorithmic}[1]
\STATE Initialize nodes and weights $\bs{d}$ with $n=0.9|\mathbb{A}_{d,k}|$ and specify the residual tolerance, e.g., $\epsilon=10^{-8}$.
\STATE Set $n_0 = 0$.
\WHILE{ $||\widetilde{\bs{R}}||> \epsilon$}
\STATE Compute $\widetilde{\bs{R}}$ and $\widetilde{\bs{J}}$ using \eqref{eq:g-def}, \eqref{S3_1_0}, \eqref{S3_3_6}, and \eqref{eq:jacobian-entries}.
\STATE Determine the regularization parameter $\lambda$ from the SVD of $\widetilde{\bs{J}}$
\STATE Compute the regularized Newton step $\Delta \bm d$ from \eqref{S3_4_5}
\STATE Update the decision variables $\bm d^{k+1} = \bm d^{k} - \Delta \bm d$.
\STATE Compute the residual norm $||\widetilde{\bs{R}}||_2$ and Newton decrement $\eta$ from \eqref{S3_1_6}.
\IF {$\eta < \epsilon$ and $||\widetilde{\bs{R}}||_2 \gg \epsilon$}
\STATE Increase $n$, initialize new nodes and weights, and go to line $3$.
\ENDIF
\ENDWHILE
\IF {$n = n_0$}
\STATE Return
\ELSE
\STATE $n_0 \gets n$
\STATE Decrease $n$ by eliminating nodes with smallest weights, go to line $3$. \annote{(See discussion about $\kappa$ in Section \ref{S3_2}.)}
\ENDIF
\end{algorithmic}
\end{algorithm}

\section{Numerical Examples}\label{sec:results}

\subsection{Illustrative numerical example in d=2}\label{S4_1_1}

In this example we consider $d=2$ for a uniform weight on $\Gamma = [0,1]^2$ with an $r=2$ total degree polynomial space with index set $\Lambda_{\mathcal{T}_2}$. This index set has six indices, corresponding to six constraints in \eqref{PRO2_0}. Using $n=3$ nodes there are $(d+1)n = 9$ decision variables. Note that exact formulas for the optimal quadrature rule is known in this case \cite{stroud_numerical_1960}. The augmented Jacobian $\widetilde{\bs{J}}$ in \eqref{S3_3_6} is a $15 \times 9$ matrix. We initialize three nodes with a Latin hypercube design on $[0,1]^2$ and use uniform weights. The singular values of the Jacobian matrix are shown in
Figure~\ref{fig_1_-1} for the initial and final decision variables corresponding to three different choices of the regularization parameter $\lambda$. The results suggest that any positive value in $[0.01,5]$ can be used as a regularization parameter. We use a constant $\lambda$ throughout the iterations and fix the residual tolerance $\epsilon=10^{-8}$. The evolution of residual $\|\widetilde{\bs{R}}\|$, Newton decrement $\eta$, and penalty parameter $c_k$ is shown in Figure~\ref{fig_1_-1}. Smaller $\lambda$ values appear to yield faster convergence.
\begin{figure}[h]
\centering
\includegraphics[width=\textwidth]{./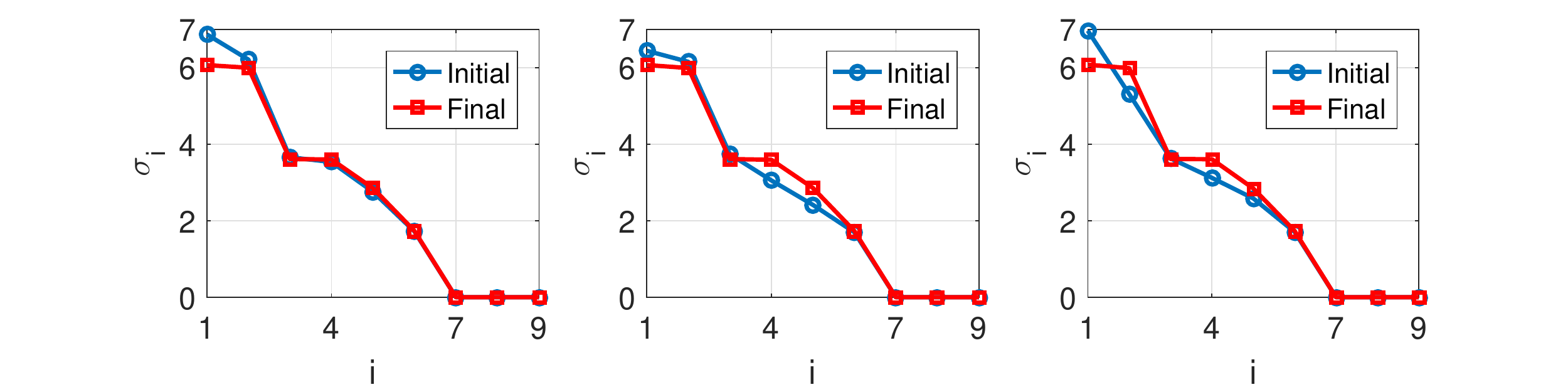}\\
\vspace{-0.12cm}
\includegraphics[width=\textwidth]{./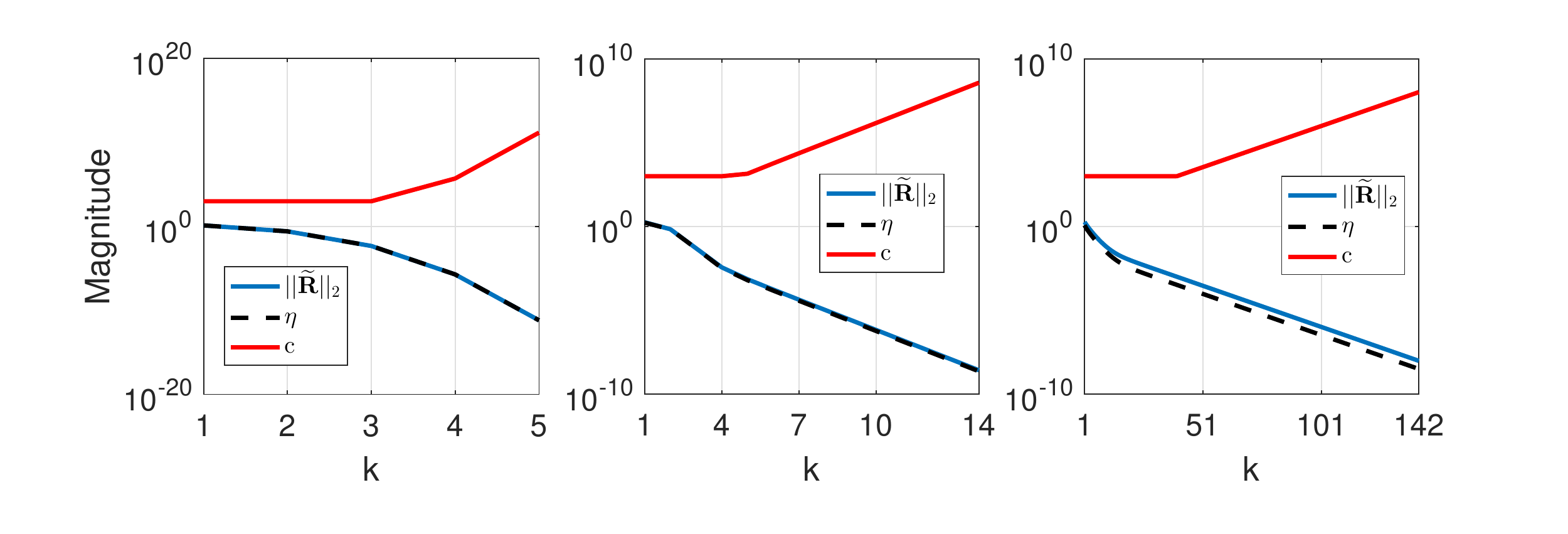}\\
\caption{\small{Singular values of Jacobian $\widetilde{\bm J}$ cf. Equation~\eqref{S3_3_6} (top) and residual norm $\|\widetilde{\bm R}\|_2$, Newton decrement $\eta$ and penalty parameter $c$ with respect to iterations $k$ (bottom)  for regularization parameter $\lambda=0.01$ (left),
$\lambda=1$ (middle) and $\lambda=5$ (right).}}\label{fig_1_-1}
\end{figure}

To visualize the optimal points for this quadrature, we randomize the initial node positions and compute designed quadrature for $100$ initializations. Plots of the ensemble of converged quadrature rules in two and three dimensions are shown in Figure~\ref{fig_1_0}.~\annote{A set of 3-points (initial and final design) in each experiment forms a triangle i.e. vertices of each triangle are the quadrature points where each triangle is visualized for distinguishment.} The cumulative time for $100$ designs took $\sim 6 ~sec$ with MATLAB on a single core personal desktop, and each design takes $\sim 15$ iterations with $\lambda=1$.

\begin{figure}[h]
\centering
\includegraphics[width=4.5in]{./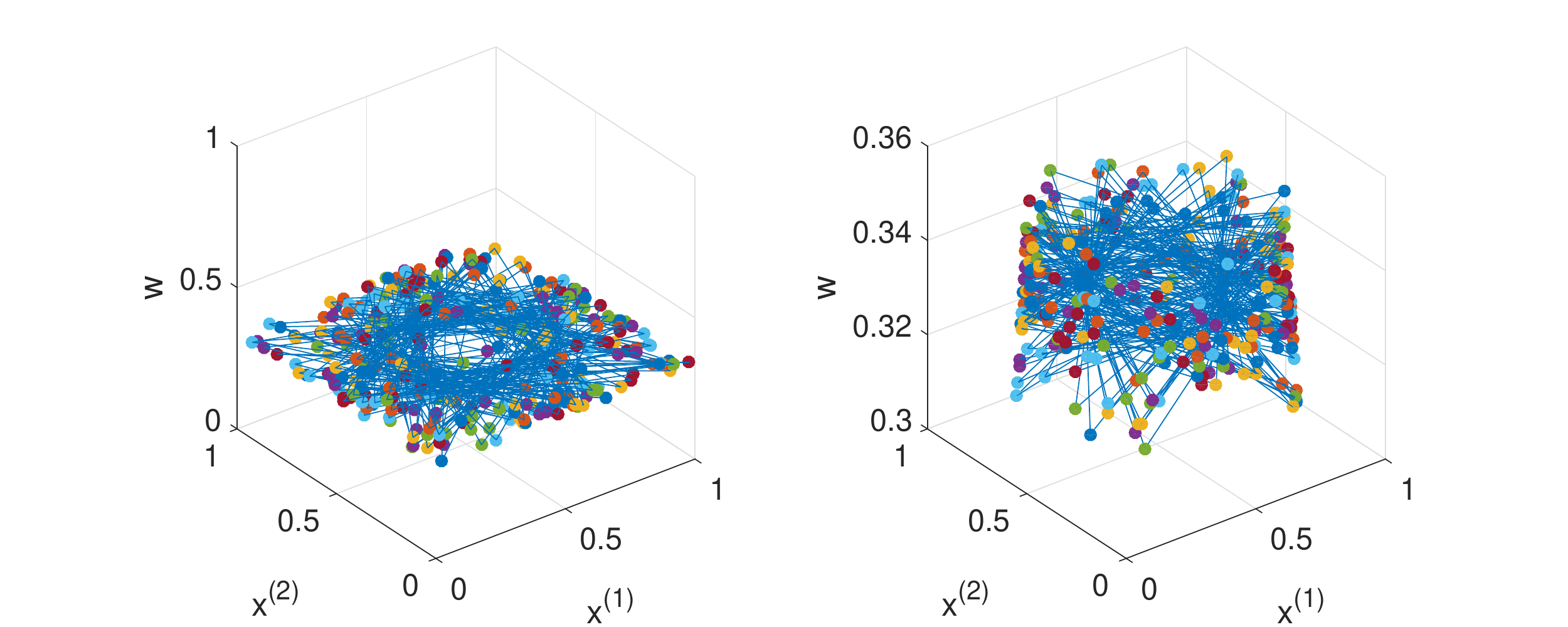}\\
\vspace{0.00cm}
\includegraphics[width=4.5in]{./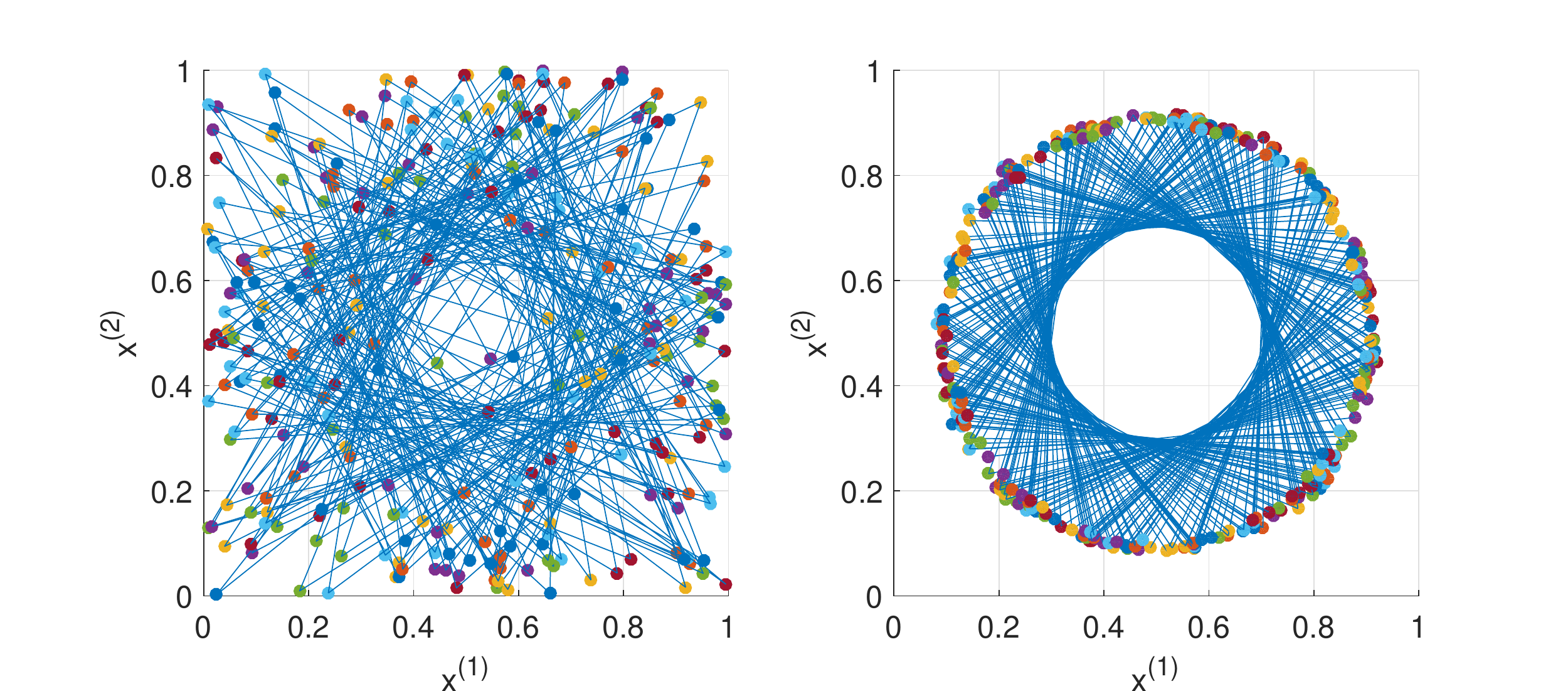}
\caption{\small{\annote{Ensemble of 3-point quadrature rules on $\Gamma = [0,1]^2$ found via designed quadrature ($d = r = 2$). Each three-point nodal configuration has nodes connected with blue lines, forming a triangle. Left: initial guesses provided to the algorithm. Right: converged designed quadrature rules. Bottom: Nodal configurations on $\Gamma$. Top: Weight values plotted as $z$-coordinates.}}}\label{fig_1_0}
\end{figure}

\subsection{Comparison with sparse grid quadrature}\label{sec:results-comparison}
In this example we consider the number of nodes required to achieve exact polynomial accuracy on total degree spaces $\Lambda_{\mathcal{T}_r}$ of various orders and dimensions. Our goal is to compare designed quadrature against sparse grids. The number of nodes required for exact integration on a sparse grid is from \cite{Qsparse}. Our tests fix dimension $d=3$ and sweep values of the order $r$, and fix $r=5$ and sweep values of the dimension $d$.
We present the nodal counts in Table~\ref{tabNE2_00} and in Figure~\ref{fig_2_0}. Table \ref{tabNE2_00} shows that designed quadrature consistently results in fewer nodes than sparse grids for moderate values of $r$ and $d$. We again emphasize that the weights for designed quadrature are all positive, unlike sparse grid quadrature.

Figure \ref{fig_2_0} compares various node counts: The number of nodes in the product rule is simply $n^d$ where $n$ is the number of univariate Gauss quadrature nodes and the ``lower bound" is the value $\mathcal{L}(\Lambda)$ determined from Theorem~\eqref{thm:half-set}. Using Theorem 2.1 in \cite{Jakeman17quadrature}, we can explicitly compute this as

\begin{align}\label{eq:lower-bound}
  \mathcal{L}\left(\Lambda_{\mathcal{T}_r}\right) = \left| \Lambda_{\mathcal{T}_{\lfloor r/2 \rfloor}}\right| = \left(\begin{array}{c} d+\left\lfloor r/2 \right\rfloor \\ d \end{array}\right)
\end{align}

Independently, we computed designed quadratures for $r=2$ and $r=3$ to confirm that the number of nodes for different dimensions $d$ coincides with $d+1$ and $2d$, respectively, as determined in \cite{Stroud71} (not shown). Also, for $r=5$ and $d=3,5$ we find the same number of nodes as those given by \cite{Stroud67} with positive weights.
\begin{table}[!h]
  \caption{Number of nodes $n$ sparse grids and designed quadratures on total degree spaces $\Lambda_{\mathcal{T}_r}$ on $\Gamma = [0,1]^d$. Top: fixed $d=3$ for various $r$. Bottom: Fixed $r=5$ for various $d$. \annote{The results in the top-half of this table can be compared with Table 4 in \cite{Xiao10}. Our quadrature rules have smaller or equal size compared with the results in \cite{Xiao10}, with the exception of $r=8$ where we report a 43-point rule instead of a 42-point rule in \cite{Xiao10}.}}\label{tab:comparison}
\centering
\resizebox{\textwidth}{!}{
\begin{tabular}{c c c c c c c c c c c c }
\hline
\hline
$d=3,r $  &1 &2& 3& 4 &5& 6& 7& 8& 9& 10& 11 \\
\hline
\textrm{Sparse Grid Quadrature (nested)}  & 1 & - & 7 &-& 19 &-&39&-&87&-&135\\
\textrm{Designed Quadrature}  & 1 & 4 &6 &10 &13 &22 &26 &43 &51 &74 &84\\
\hline\hline
$r=5,d$  &1 &2& 3& 4 &5& 6& 7& 8& 9& 10 \\
\hline
\textrm{Sparse Grid Quadrature (nested)}   &3& 9& 19& 33& 51& 73& 99& 129& 163& 201 \\
\textrm{Designed Quadrature}  &3 &7& 13& 21& 32& 44& 63& 88& 114& 148 \\
\hline
\end{tabular}
}
\label{tabNE2_00}
\end{table}

\annote{In Table~\ref{tabNE_peroformance} we show the performance of the scheme with respect to the number of nodes and iterations, CPU time (measured with tic-toc on MATLAB) and the achieved residual norm. To that end we consider $d=4$ for different orders $r$ and we set the tolerance to $\epsilon=10^{-12}$ in this example. It should be noted that these quantitative metrics can vary depending on the random initialization and regularization parameters throughout the algorithm however provide a useful holistic measure for the method's performance.
\begin{table}[!h]
\caption{Performance of the scheme with respect to number of nodes and iterations, CPU time and residual norm for $d=4$ and various orders $r$ with total order index set.}\label{tab:iterations}
\centering
\resizebox{\textwidth}{!}{
\begin{tabular}{c c c c c c c c c c c c }
\hline
\hline
$d=4,r $  &1 &2& 3& 4 &5& 6& 7& 8& 9& 10 \\
\hline
\textrm{Half-set size} $\left| \Lambda_{\mathcal{T}_{\lfloor r/2 \rfloor}}\right| $ & 1	& 5&	5&	15&	15&	35&	35&	70&	70&	126 \\
\textrm{Number of nodes}  &       1 &  5 & 8 & 16 &       21 &            43 &    55 &      103 &   138 &207\\
\textrm{Number of iterations}  & 10 & 9 &11 &56 &        179 &            146  & 298 &     153 &     461 &197\\
\textrm{CPU time (sec)}  &      0.09 &0.21 &0.25 &0.91 &   4.68&      8.45 & 30.66  &  33.06 &   183.67 &160.44\\
\textrm{Residual Norm } $||\tilde{\bm R}||_2$&       1e-14 &2e-14 & 4e-13 & 8e-13 &4e-13 &9e-13 & 9e-13&  3e-13 &    9e-13 &9e-13
\end{tabular}
}
\label{tabNE_peroformance}
\end{table}

}

\begin{figure}[h]
\centering
\includegraphics[width=4.7in]{./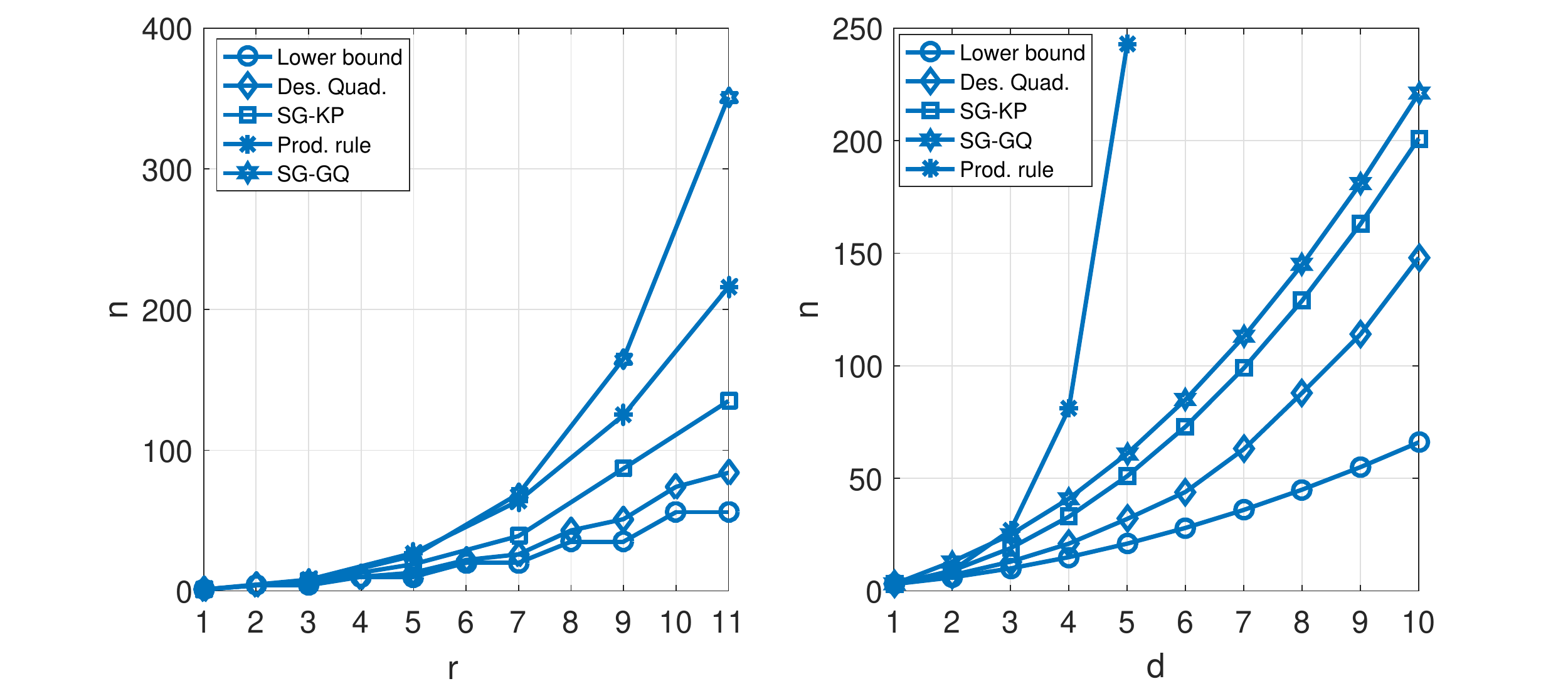}
\caption{\small{Number of nodes for fixed $d=3$ (left), and fixed $r=5$ (right) for total order index set $\Lambda_{\mathcal{T}_r}$. The lower bound is given in \eqref{eq:lower-bound}, ``SG-KP" and ``SG-GQ" are sparse grid constructions using nested Kronrod-Patterson and non-nested Gauss quadrature rules, respectively.}}\label{fig_2_0}
\end{figure}

To illustrate how the regularization parameter $\lambda$ is chosen, we show the singular values and regularization parameter choice for the case $r=5, d=7$. Figure~\ref{fig_2_01} shows the regularization parameter selection for an iteration in the middle of the procedure. The regularized parameter is selected as $\lambda=10$ by investigating the spectrum of singular values and its L-curve.

In practice, one could fix $\lambda$ as a function of $\|\bs{R}\|_2$ (or $\|\widetilde{\bs{R}}\|_2$). In Figure \ref{fig_2_0}, we have $\lambda = 10$ with $\|\bs{R}\|_2 = 40$. Then, for example, one could take $\lambda=50$ for $200 \leq ||\bs{R}||_2 \leq 500$ and $\lambda=10$  for $20 \leq ||\bs{R}||_2 \leq 200$. Such an \textit{a priori} tabulation could be fixed for a variety of $(d,r)$ values.

\begin{figure}[h]
\centering
\includegraphics[width=4.5in]{./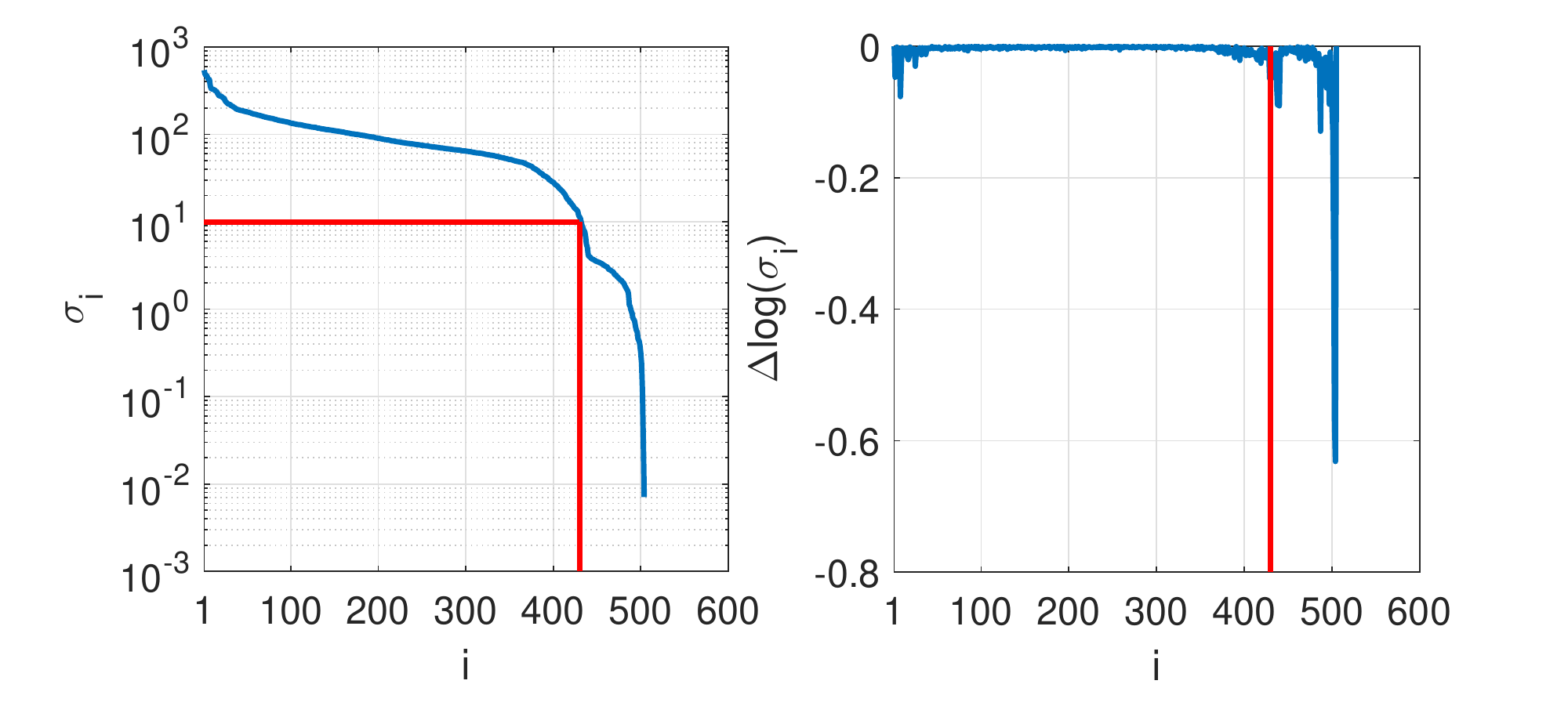}\\
\includegraphics[width=4.5in]{./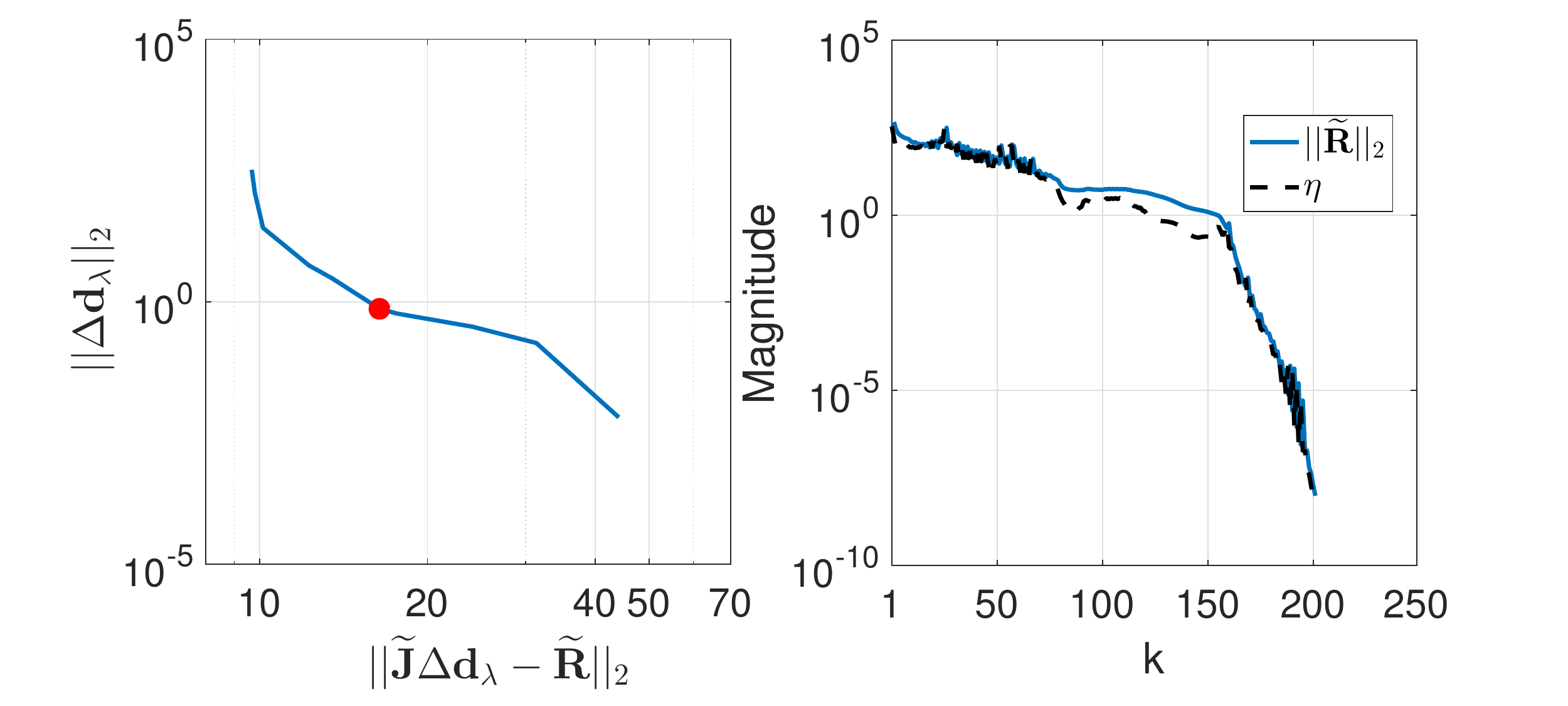}
\caption{\small{Singular values of $\widetilde{\bm J}$ with the chosen regularization parameter (top left), finite difference on $\log$ of singular values and the chosen singular value index (top right), $L$-curve for the given $\widetilde{\bm J}$ and $\widetilde{\bm R}$: the point on the $L$-curve corresponding
to the selected regularization parameter $\lambda$ is indicated with the red circle (bottom left), convergence of the scheme for $d=5,r=7$ (bottom right).}}\label{fig_2_01}
\end{figure}

\subsection{Interpolation with designed quadrature}
Designed quadrature rules can be used to construct polynomial interpolants. Suppose we have a designed quadrature rule $(\bs{X}, \bs{w})$ of size $n$ that matches moments for indices on $\Lambda$ (up to the tolerance $\epsilon$), and assume $n = |\Lambda|$\footnote{Designed quadrature rules achieve $n < |\Lambda|$, but in this section we will enforce $n = |\Lambda|$ for the purposes of forming an interpolant.}. For continuous function $f$, let $\mathcal{I}(f)$ denote the unique interpolant of $f$ from $\Pi_{\Lambda}$ at the locations $\bs{X}$. Lebesgue's lemma states
\begin{align*}
  ||f-\mathcal{I}(f)||_{\infty} &\leq (L+1) \inf_{p \in \Pi_{\Lambda}} ||f-p||_{\infty}, & L &= \sup_{\|h\|_\infty = 1} \left\| \mathcal{I}(h) \right\|_\infty,
\end{align*}
where $\|\cdot\|_\infty$ is the maximum norm on $\Gamma$, and the supremum is taken over all functions $h$ continuous on $\Gamma$. The constant $L$ is the Lebesgue constant; small values indicate that interpolants are comparable to the best approximation measured in the maximum norm \annote{\cite{lubinsky_survey_2007}}. The Lebesgue constant can be computed explicitly: The interpolant $\mathcal{I}(f)$ can be expressed as
\begin{align*}
  \mathcal{I}(f)(\bs{x}) &= \sum_{j=1}^n \ell_j(\bs{x}) f(\bs{x}_j), & \ell_j(\bs{x}_k) &= \delta_{j,k},
\end{align*}
where the $\ell_j$ are the cardinal interpolation functions. \revv{The Lebesgue function $L_n$} and the Lebesgue constant $L$ are, respectively,
\revv{
\begin{align*}
  L_n(\bm x) &= \displaystyle \sum_{j=1}^n |\ell_j(\bm x)|, & L = \|L_n\|_\infty
\end{align*}
}

Finding a set of points with minimal Lebesgue constant is not trivial. In $d=2$ dimensions the Padua points are essentially the only explicitly constructible set of nodes with provably minimal growth of Lebesgue constant on total degree spaces \cite{Bos06}. To compare designed quadrature with Padua points, we consider degree-$5$ Padua points, yielding $\dim \Pi_{\Lambda_{\mathcal{T}_5}} = 21$. These points along with associated quadrature weights integrate polynomials in $\Pi_{\Lambda_{\mathcal{T}_9}}$ exactly with respect to the product Chebyshev weight $\omega$ \cite{Bos06}.

With designed quadrature we are able to find $17<21$ nodes and weights that integrate polynomials in $\Pi_{\Lambda_{\mathcal{T}_9}}$ exactly. However, for the purposes of interpolation in this section, we enforce $n = 21$ nodes in the designed quadrature framework. To initialize the design we start from nodes that are close to Padua nodes. The Lebesgue function $L_n(\bs{x})$ for both cases are shown in Figure~\ref{fig_5_1}. The Lebesgue constant for Padua points and designed quadrature are $L=4.9478$ and $L=5.1553$, respectively. The similar small values of $L$ suggest that the designed quadrature points and the Padua points are of comparable quality in terms of constructing interpolants. However, we reiterate that for quadrature we can use fewer nodes (17) than the Padua points (21).

\begin{figure}[h]
\centering
\includegraphics[width=\textwidth]{./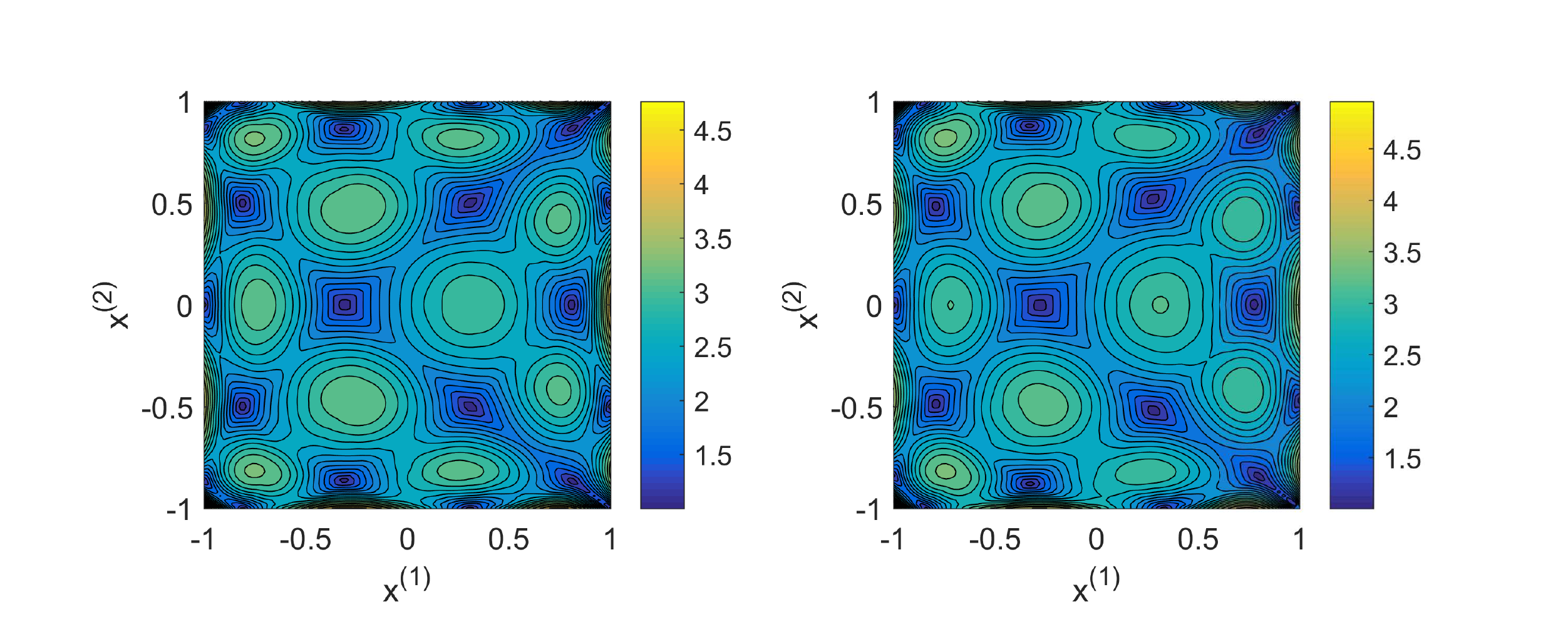}
\vspace{-0.3cm}
\caption{\small{Contour plots of Lebesgue function $L_n$ for Padua points (left) and designed quadrature (right)}}\label{fig_5_1}
\end{figure}

\subsection{Designed quadrature: U}\label{sec:results-U}
The formulation of designed quadrature allows $\Gamma$ and $\omega$ to be of relatively general form, but we can construct quadrature rules in even more exotic situations. Let $\Gamma = [-1,1]^2$ with $\omega$ the uniform weight. Instead of enforcing $\bs{x}_j \in \Gamma$ in \eqref{S3_3_3}, we enforce $\bs{x}_j \in \widetilde{\Gamma}$, where $\widetilde{\Gamma} \subset \Gamma$ is a ``U" shape, mimicking the logo of the University of Utah; see Figure \ref{fig_5_2}, left.

The penalty function for this problem has the same quadratic form as those discussed in Section~\ref{S3_3} and separate penalties are considered for violations in both $x^{(1)}$ and $x^{(2)}$ directions. For example, we can model the infeasible rectangular region $\mathcal{S}_1$ between the two ascenders of the U with non-zero penalty in $x^{(1)}$ direction and zero penalty in $x^{(2)}$ direction as

\begin{equation}\label{S3_3_4}
\begin{array}{l }
  \begin{cases}
\mathcal{S}_1 :  (0 \leq |x_1^{(1)}| \leq 0.4) \cap (-0.35 \leq x^{(2)} \leq 0.95) \\
P_1 = (x^{(1)}-0.4)^2, P_2=0
\end{cases}
\end{array}
\end{equation}
A similar method can be used to penalize the semicircular region below the rectangle where violations in both directions are penalized. The total penalty for infeasible regions then involves both $P_1$ and $P_2$ e.g.  $P=10\sqrt{P_1+P_2}$ which is shown in Figure \ref{fig_5_2}, right.  We compute a designed quadrature rule for total degree $r=2$, achieving residual tolerance of $\epsilon=0.0098$ with $n=150$. We need a relatively large number of nodes, and achieve only a relatively large tolerance (compared to $\epsilon=10^{-8}$ for previous examples). This is due to the difficulty of this problem: we want nodes to lie in $\widetilde{\Gamma}$ but want to achieve integration over $\Gamma$. We expect that convergence for larger $r$ will require many iterations and may not be able to achieve arbitrarily small tolerances.
\begin{figure}
\centering
\begin{minipage}{0.45\textwidth}
\includegraphics[width=\textwidth]{./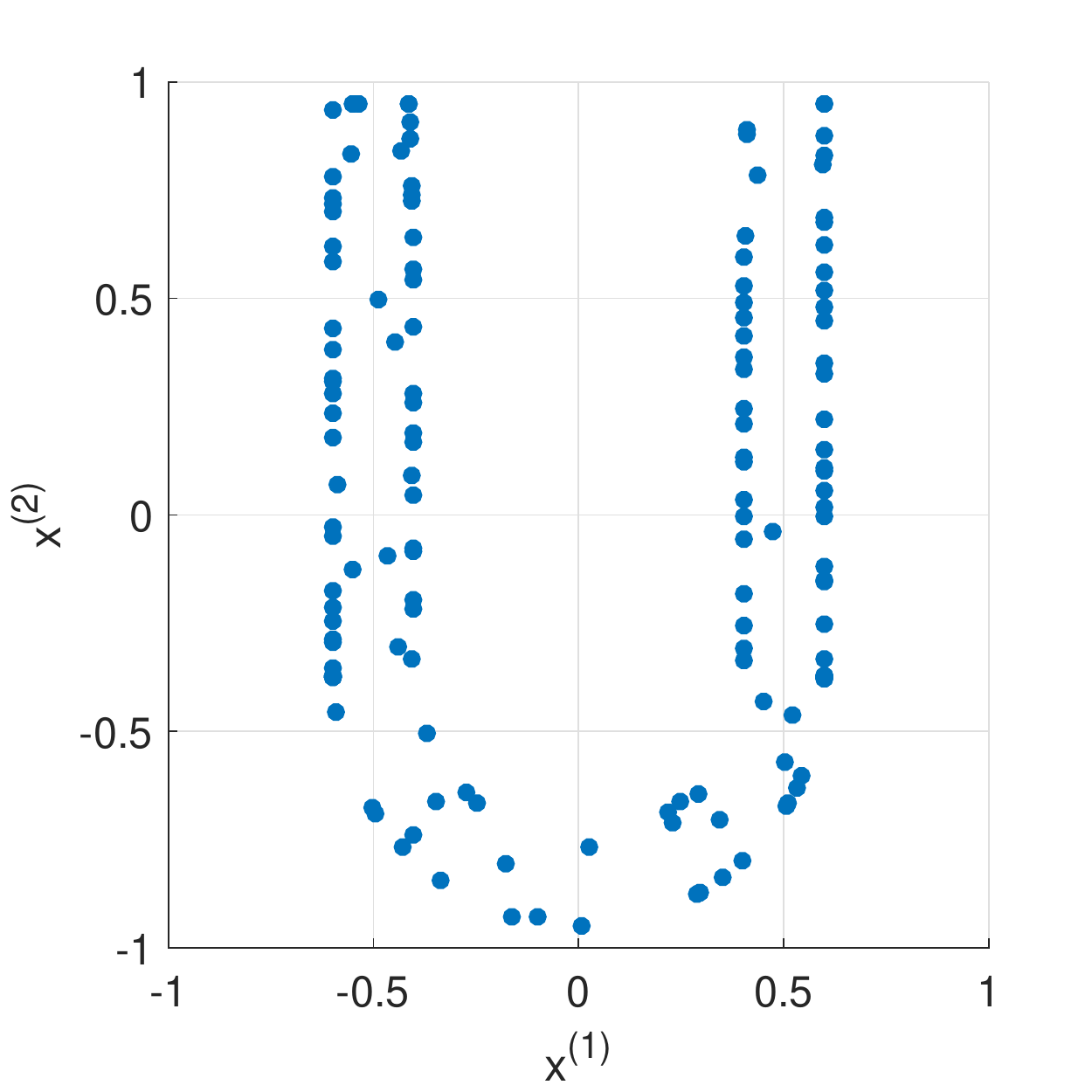}
\end{minipage}
\begin{minipage}{.45\textwidth}
\includegraphics[width=\textwidth]{./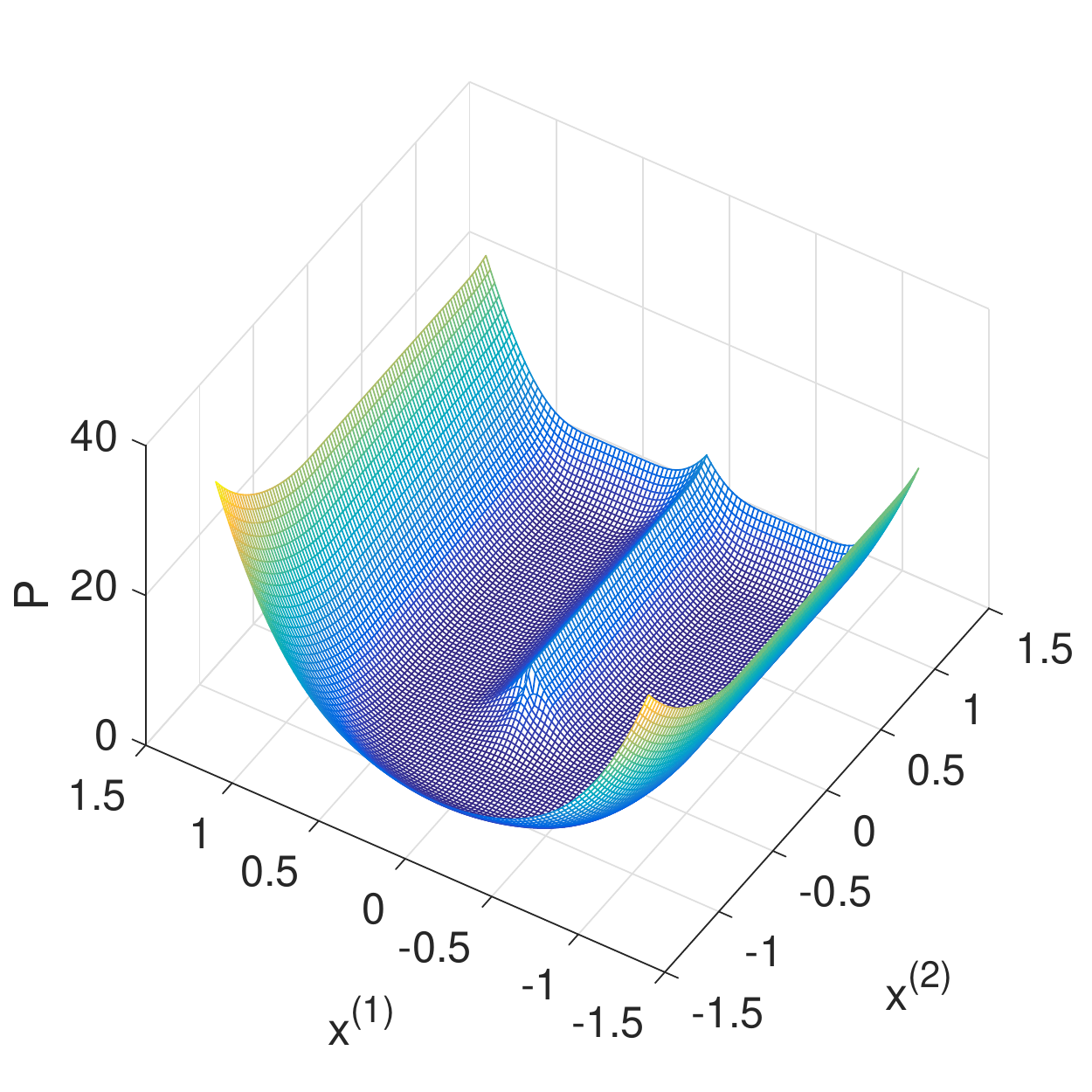}
\end{minipage}
\vspace{-0.1cm}
\caption{\small Designed Quadrature for uniform weight and $d=r=2$ with ``U'' shape indicating the University of Utah (left), the penalty function used in the scheme (right).}\label{fig_5_2}
\end{figure}

\subsection{Integration in high dimensions}\label{sec:results-highd}
To demonstrate the capability of designed quadrature for integration in high dimensions, we consider $d=100$ with hyperbolic cross index set $\Lambda_{\mathcal{H}_r}$. Figure~\ref{fig_2_1} (top) shows different slices of the $d=100$ nodal configuration generated by designed quadrature for the uniform weight on $\Gamma = [-1,1]^{100}$ and $r=4$, for which we have $n=106$ and $|\Lambda_{\mathcal{H}_4}|=5351$.

Figure \ref{fig_2_1} (bottom) shows the behavior of designed quadrature weights with respect to the Euclidean norm of the nodes (distance to the origin) for $\omega$ the Gaussian weight on $\Gamma = \R^{100}$ for total order $r=2$ and hyperbolic cross orders $r=3,4$. As expected the weights decay as the node norms increase. We find
$n=101$ for all these quadratures, again confirming the optimal $n=d+1$ size for total order $r=2$ \cite{xiu_numerical_2008}. It is also interesting to note that the minimum Euclidean norm of nodes for these cases are somewhat equal viz $||x||=8.89,~8.93,~8.85$ respectively.

Computing designed quadratures in high dimensions reveals computational challenges that are not present in small-to-moderate dimensions: since the nonlinear system is quite large, we do not perform the SVD of Jacobian in each iteration. Instead, we regularize the pseudo-inverse matrix directly and compute the Newton step as $\Delta \bm d = (\bm J^T \bm J+\lambda \bm I)^{-1} \bm J^T \bm R$. The parameter $\lambda$ can be selected based on the residual norm value, as explained in the previous section.
The designed quadrature algorithm for these cases in $d=100$ took $\sim 100$ iterations and less than $30$ minutes on a personal desktop in MATLAB.
\begin{figure}[h]
\centering
\includegraphics[width=\textwidth]{./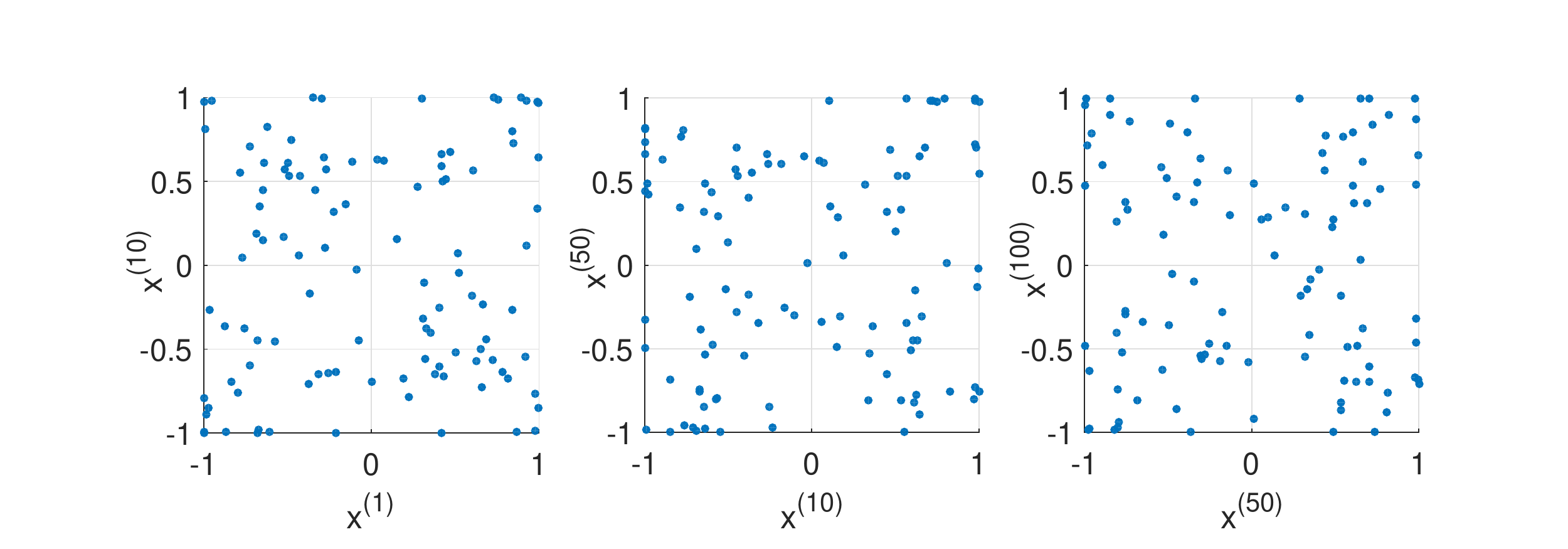}\\
\vspace{-0.2cm}
\includegraphics[width=\textwidth]{./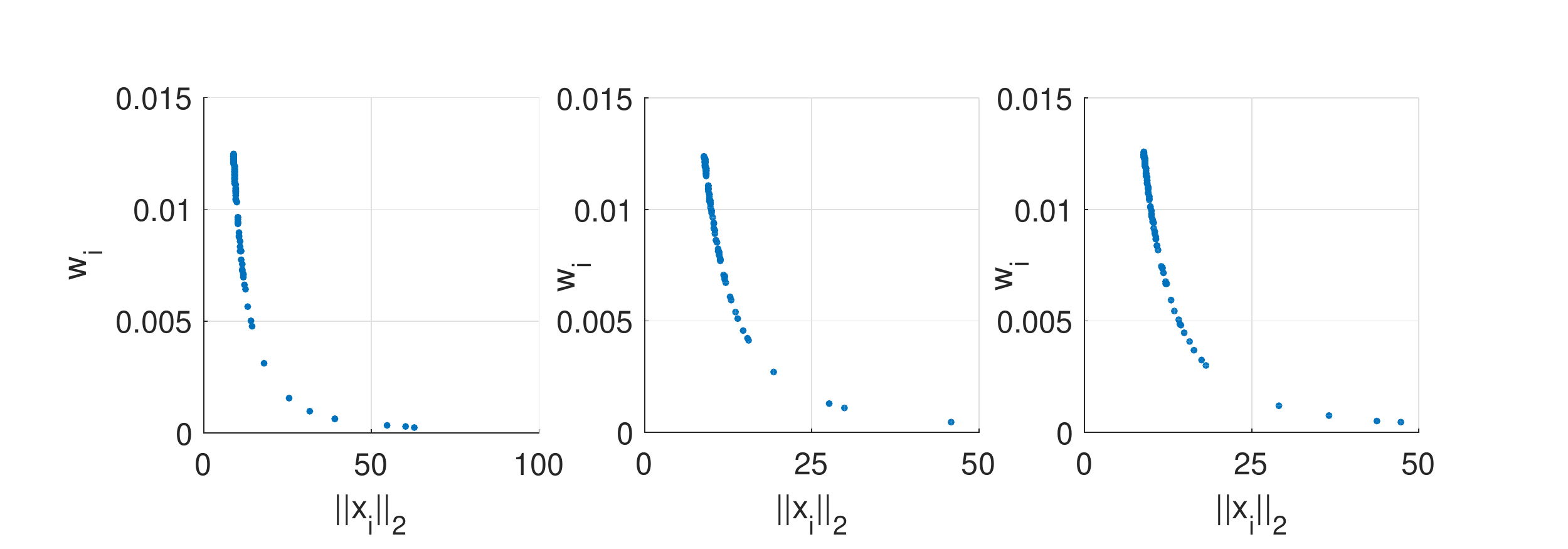}
\caption{\small{Different two dimensional slices of $d=100$ designed quadrature for uniform weight and hyperbolic cross order $r=4$ (top), formation of weights with respect to Euclidean norm of nodes for Gaussian weight in $d=100$ (bottom): total order
$r=2$ (left) hyperbolic cross order $r=3$ (middle) and $r=4$ (right).}}\label{fig_2_1}
\end{figure}

\subsection{High-dimensional integration: linear elasticity problem}
To investigate the performance of the high dimensional quadrature points we compute the mean and variance of compliance indicative of the elastic energy for a solid cantilever beam with uncertain material properties.

\annote{ The compliance for the spatial domain $\Omega$ reads
\begin{align*}
C = \int_{\Omega} fu d\Omega
\end{align*}
where $u$ is the displacement and $f$ is the surface load on the structure. To find $u$, the equation of motion in linear elasticity $\nabla . \bm \sigma + f = 0$ where $\bm \sigma$ is the stress tensor and $\nabla$ is the divergence operator is solved via Finite Element Method.  The global displacement is characterized with $n_e$ finite elements
\begin{align*}
  u = \sum_{i=1}^{n_e} u_i \Psi_i
\end{align*}
where $\Psi_i$ are finite element shape functions and $u_i$ are nodal displacements. The nodal displacements $\bm U=\{u_i\}_{i=1}^{n_e}$ are solution of a linear system $\bm K \bm U = \bm F$ (stems from the equation of motion) where
\begin{align*}
\begin{array}{l}
\bm K=\displaystyle \int_{\Omega} \displaystyle\frac{\partial \Psi^T}{\partial \bm {x}} \mathbb{C} \displaystyle\frac{\partial \Psi}{\partial \bm {x}} d \Omega, \\
\\
\bm F= \displaystyle \int_{\Omega} f \Psi d \Omega
\end{array}
\end{align*}
with $\mathbb{C}$ being an elasticity matrix. We consider the plane stress condition in this example, hence for our two dimensional problem

\begin{align*}
\mathbb{C} =\frac{E}{1-\nu^2}
\left[
\begin{array}{ccc}
  1 & \nu & 0 \\
  \nu & 1 & 0 \\
  0 & 0 & \displaystyle \frac{1-\nu}{2}
\end{array}
\right]
\end{align*}
where $E$ is the modulus of elasticity and $\nu$ is the Poisson's ratio.
}

The beam geometry is shown in Figure~\ref{fig_2_2}, and is modeled with $100$ standard square finite elements, where each element has lognormal modulus of elasticity as $E_i=10^{-9}+\exp({\xi^{(i)}})$. The random variables $\xi^{(i)}|_{i=1}^{100}$ are independent standard random normal variables and the Poisson's ratio is $\nu=0.3$. Our goal is to compute first- and second-order statistics of the compliance; these statistics are integrals with respect to the 100 variables $\xi^{(i)}$, and so we approximate these statistics via designed quadrature.

For comparison against designed quadrature we use quasi-Monte Carlo (QMC) samples of size \annote{$n=64,128,256,512$ and $n=1024$}, where we treat the latter as the exact solution. The QMC samples are generated on $[0,1]^{100}$, and are mapped to $\R^{100}$ using inverse transform sampling for a 100-dimensional standard normal random vector. We have chosen the number of QMC samples so that they \annote{almost} match the number of designed quadrature nodes computed from the index sets (i) total order with $r=2$ \annote{and $n=101$ nodes}, (ii) the index set $\Lambda_{\mathcal{H}_4} \cup \Lambda_2$ \annote{with $n=155$ nodes}, where $\Lambda_2$ contains pairwise interactions of maximum univariate order 2, and (iii) the index set $\Lambda_{\mathcal{H}_4} \cup \Lambda_3$  \annote{with $n=255$ nodes}, where $\Lambda_3$ contains pairwise interactions of maximum univariate order 3.

Figure~\ref{fig_2_2} compares errors in the computed mean and standard deviation of the compliance for QMC versus designed quadrature; we see that designed quadrature achieves significantly better errors. We believe QMC would be more effective if the problem involved many more variables, larger index sets, \annote{and/or non-smooth quantities of interest}, as in those cases the number of designed quadrature is prohibitively large and finding a suitable quadrature rule is computationally challenging.

\begin{figure}
\centering
\begin{minipage}{0.450\textwidth}
\includegraphics[width=\textwidth]{./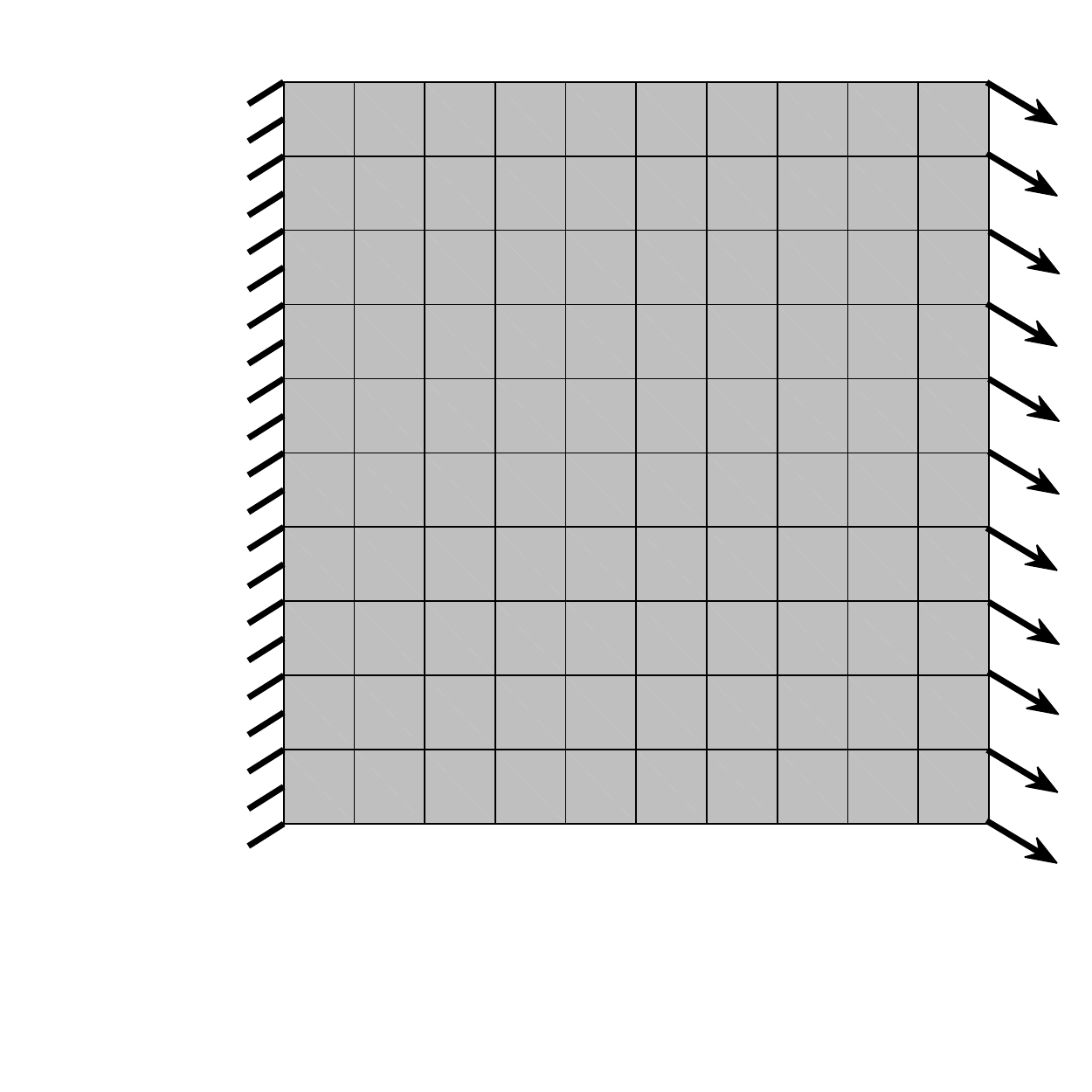}
\end{minipage}
\begin{minipage}{.43\textwidth}
\vspace{-0.75cm}
\includegraphics[width=\textwidth]{./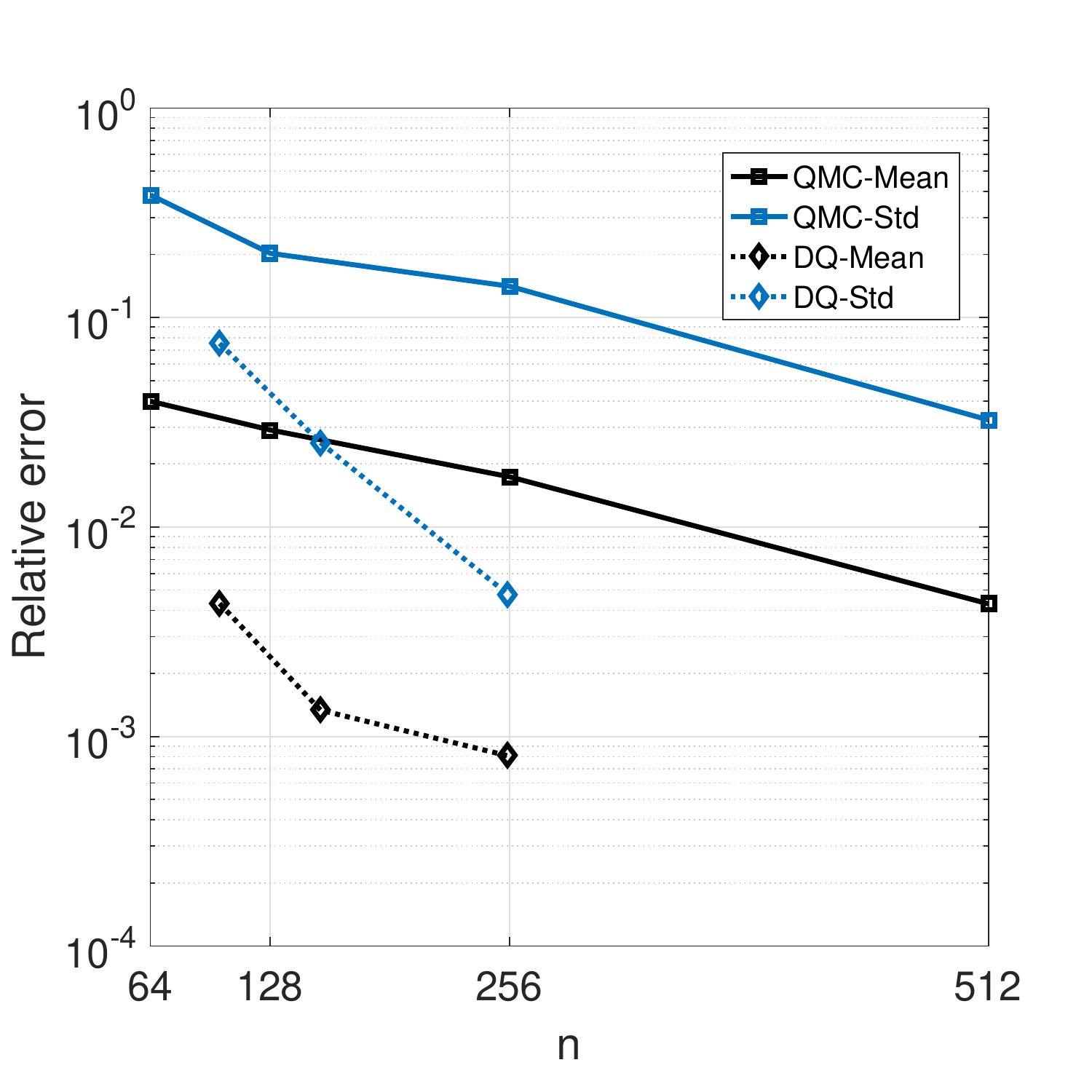}
\end{minipage}
\vspace{-0.7cm}
\caption{\small Finite element discretization for a linear elastic cantilever beam with random elastic modulus (left) convergence of compliance mean and variance with quasi-Monte Carlo (QMC) samples and designed quadrature (DQ) in $d=100$ variables
(right).}\label{fig_2_2}
\end{figure}

\subsection{Designed quadrature for topology optimization under uncertainty}
Our final example utilizes polynomial chaos (PC) methods \cite{ghanem02} to build surrogates for topology optimization under geometric uncertainty \cite{Keshavarzzadeh17}. Figure~\ref{fig_6_0_0} shows the flowchart for design optimization under uncertainty. To build PC surrogates at each design iteration, $n$ Finite Element Analysis (FEA) and sensitivity analyses are performed in order to quantify the uncertainty associated with random variables $\xi^{(i)}$ as in the last example. This is the most costly step in the design process, and hence small $n$ can result in significant computational savings.
\begin{figure}
\centering
\includegraphics[width=4.0in]{./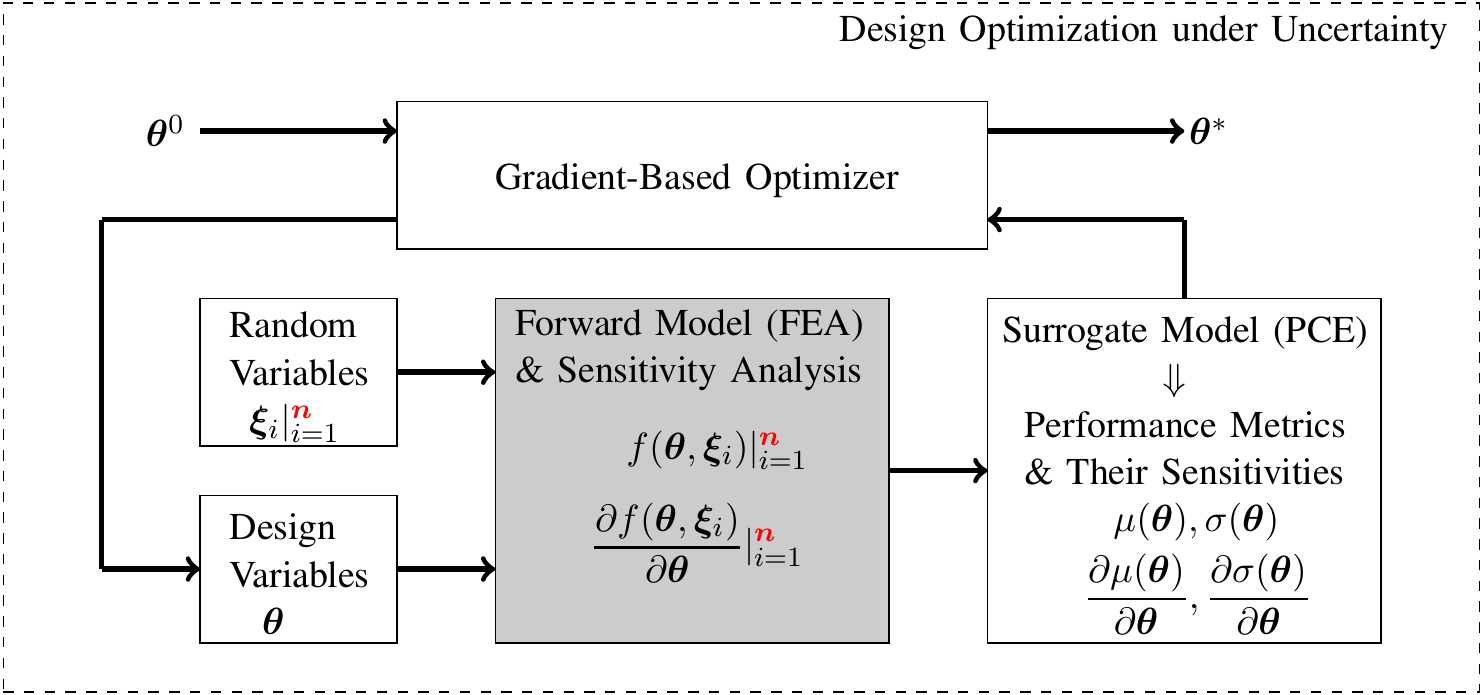}
\caption{\small Design optimization under uncertainty flowchart}\label{fig_6_0_0}
\end{figure}

The perturbation in the boundary of topology interfaces $Z$ are modeled via a Karhunen-Lo\`{e}ve random field with $d=4$ significant modes as \begin{equation}\label{S5_3}
Z(x,\bm \xi) = \displaystyle \sum_{i=1}^4 \sqrt{\lambda_{i}} \gamma^{(i)}(x) \xi^{(i)}.
\end{equation}
where $\xi^{(i)} \in U[-\sqrt{3},\sqrt{3}]$ are independent uniform random variables. Sparse grids built from nested rules were utilized in \cite{Keshavarzzadeh17} to develop a surrogate for total degree $r=3$ in $d=4$ dimensions. This requires a quadrature rule that can accurately integrate polynomials up to order $r=6$ (see Proposition \ref{prop:quadrature-stability}). A standard construction of sparse grid rules yields odd orders of polynomial accuracy and hence a quadrature rule for $r=7$ with $n=81$ nodes is used. We observe that $33$ out of these $81$ nodes have negative weights \cite{Qsparse}. On the other hand we use designed quadrature constrained to integrate polynomials up to degree $r=6$ and compute $n=43$ nodes, almost half ($\sim 53\%$) the number of sparse grid points (43/81=0.53) and all nodes have positive weights. \annote{These nodes and weights are listed in Table~\ref{node_weight_print}.}

We approximate the mean and variance for the final robust topology design of the Messerschmitt-B\"{o}lkow-Blohm (MBB) beam shown in Figure~\ref{fig_6_0} (left) with both quadrature sets, and use a sparse grid rule with $n=641$ points ($r=13$) as the ``true" solution.  The mean and standard deviation for the true solution are $\mu_{true}=120.1032, \sigma_{true}=0.7853$ respectively. The mean, standard deviation and relative errors in mean $e_{\mu}=|(\mu-\mu_{true})/\mu_{true}|$ and in standard deviation $e_{\sigma}=|(\sigma-\sigma_{true})/\sigma_{true}|$ are listed in Table~\ref{tabNE2_30}. We achieve higher accuracy with designed quadrature at nearly half the cost. Figure~\ref{fig_6_0} also visually compares the probability density function (PDF) of compliance for both cases, and no substantial difference is observed.

\begin{table}[!h]
\caption{Mean and standard deviation estimation for the robust topology design.}
\centering
\begin{tabular}{l c c c c c c }
\hline\hline
\textrm{Quadrature Rule}   & $\mu$ & $e_{\mu}$ & $\sigma$ &  $e_{\sigma}$ & Cost \\
\hline
\textrm{Sparse Grid}   & 120.1326 & 2.44e-04 & 0.7836 & 2.16e-03 & 81 Simulations \\
\textrm{Designed Quadrature}  & 120.1028 & 3.33e-06 & 0.7854 & 1.27e-04 & 43 Simulations \\
\hline
\end{tabular}
\label{tabNE2_30}
\end{table}

\begin{figure}
\centering
\begin{minipage}{0.40\textwidth}
\includegraphics[width=\textwidth]{./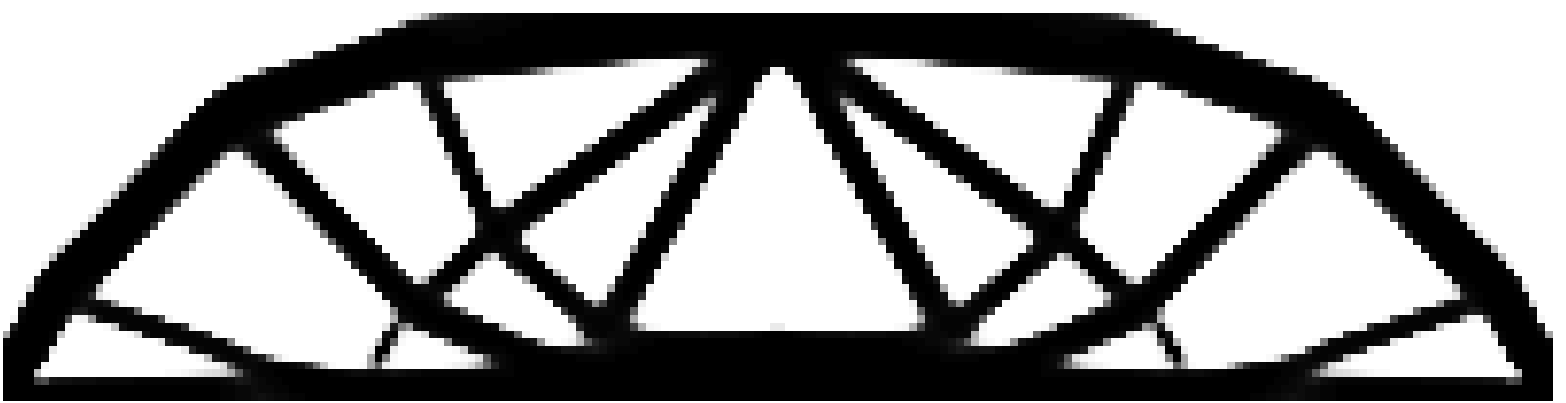}
\end{minipage}
\begin{minipage}{.40\textwidth}
\includegraphics[width=\textwidth]{./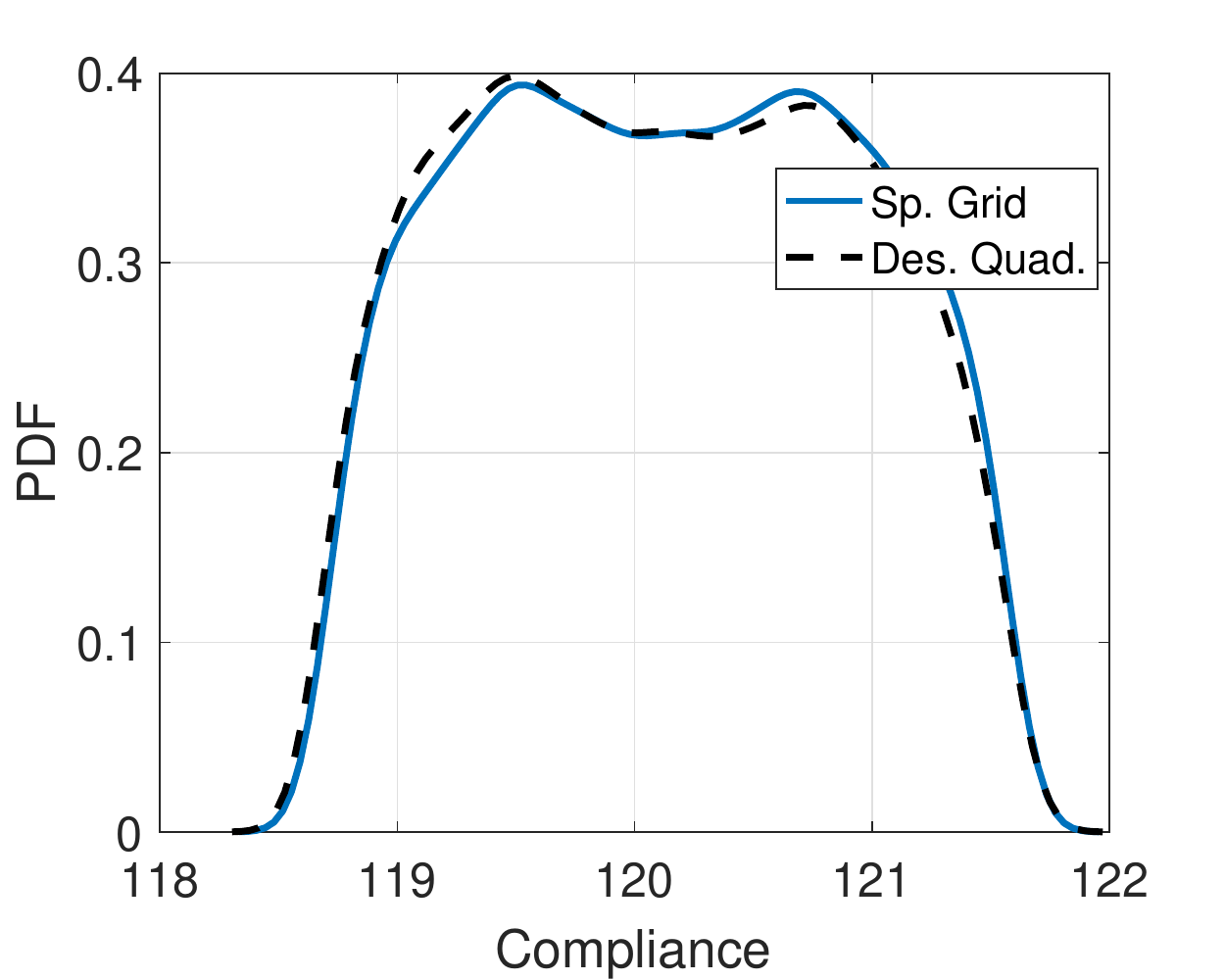}
\end{minipage}
\vspace{-0.3cm}
\caption{\small Robust topology design of the MBB beam (left) Compliance PDF with Sparse Grid and designed quadrature (right) }\label{fig_6_0}
\end{figure}


\begin{table}[!h]
\tiny
\caption{Designed nodes and weights for uniform wight function associated with $d=4,r=6$.}
\centering
\begin{tabular}{c c c c c}
\hline\hline
$x^{(1)}$   & $x^{(2)}$  & $x^{(3)}$  & $x^{(4)}$  &  $w$  \\
\hline
0.257802083101815 &	-0.0703252346579532 &	0.962710865388279	& 0.430231485089995	& 0.0261815176727414\\
-0.816973940130726&	-0.943714906761859	&0.386890523282751	&-0.999999000000035	&0.00580285921526766\\
-0.947032046309044&	0.989213881193871&	0.936667215690650	&-0.993786957614627	&0.00249952956479966\\
-0.410640873236206&	0.954255232732162	&-0.147886263760743	&0.759793319115318	&0.0183482544886740\\
0.231143583536335	&0.304638143131838&	-0.528664154404583	&0.0739711055845102	&0.0410433205157124\\
-0.778491697308234	&0.966653453252213&	-0.383947457004414	&-0.527873684868762	&0.0139368201993277\\
-0.180274497367222&	-0.0792041207370926&	0.828782356307522&	-0.777765421528772&	0.0315300612985717\\
0.540286687837802&	-0.208086450042028	&-0.948367080027728	&-0.501356657306036	&0.0215578327462099\\
-0.683026871793909&	0.647010501071650	&-0.973583715205816	&0.370583426726367	&0.0161108573238146\\
-0.993338263553449&	-0.368646129691905	&-0.737371067829234	&-0.348475820258963	&0.0147573865118333\\
0.407947779669019	&-0.815591287366472	&-0.0796807556349552&	0.378883833053981&	0.0331223595693260\\
0.938583690749431	&-0.673570409626489	&0.750349855298878	&0.515481332659911	&0.0138830007891160\\
0.676361893172788&	-0.0863458892282786	&0.307033851877092	&-0.222719471684161	&0.0666800814123564\\
0.871158904206783&	0.833748485858754	&-0.0988153767941961&	0.190418047822592	&0.0231223581306122\\
-0.637055255430739	&0.561219983742431	&0.650334703413941	&-0.0934633784282683	&0.0464871742390699\\
-0.724596294549971	&-0.469996083934142&	0.597859508917895&	0.576481467431314&	0.0349899149656204\\
-0.0633965531889436&	0.107495869838039&	0.150581945391035&	0.708624486280424&	0.0579979230656018\\
0.447670409694807&	0.676539689861564	&-0.761277790685523	&0.818329189952187	&0.0206811254936501\\
-0.421094936258687&	-0.384610025091731	&-0.645174618242432	&0.192061679511636	&0.0430978935629120\\
0.144084034877935&	0.920034465753694	&0.719442066934779	&0.265836622954943	&0.0226831954639939\\
-0.511575703485635&	-0.940233287027892&	-0.881122093604340	&-0.679836608445536	&0.00914818101224769\\
0.903361192617833&	-0.930618551587704&	-0.556114178416505	&-0.368802948869006	&0.0133824642384102\\
-0.451300661973021&	-0.678204534689092&	0.109872507715082	&-0.268415947407672	&0.0469834603269315\\
0.570179507325998&	0.946048343399397	&-0.888875656537771	&-0.502288128155230	&0.0114411429728491\\
0.947673444496712&	-0.0487093539097375	&-0.745420922954487	&0.483440076541598	&0.0180891170799238\\
-0.151578782518484&	0.536570691044281	&-0.432491091797662	&-0.296255229184200	&0.0429298801103472\\
0.200254358361331&	-0.574573679946025	&-0.370917668694681	&-0.848237515108908	&0.0354514497467619\\
0.703268719144780&	-0.538649441561851	&-0.243402446610143	&0.932031263765523	&0.0178590105949260\\
-0.851069823343954&	-0.999999000000274	&-0.332766663879255	&0.640694186502822	&0.0109542526977389\\
0.778210999922193&	-0.689412923038060&	0.645938727211798	&-0.866284593236453	&0.0141564568596360\\
0.861114502173077&	0.635646385445806	&0.943650909951228	&-0.537424461094664	&0.0122992882684838\\
0.923895085294775&	0.356961947592050	&-0.390843097452625	&-0.911346350395596	&0.0147169035847508\\
0.224936370766468&	0.759504704125777	&0.295151117998272	&-0.810538235949664	&0.0343456718994955\\
0.753566197170202&	0.547890728899287	&0.582098594572620	&0.865739837846072	&0.0201259507621183\\
-0.999996169574511	&0.450595235653401	&-0.0687856135220421&	0.522619830858851&	0.0196754195407364\\
-0.740580150798581	&0.648489590619417	&0.938458459820395	&0.997858479760182	&0.00660961411039667\\
-0.641393244038470	&-0.177020578876251	&-0.678531778799864	&0.993823966856282	&0.0158119117410319\\
0.101688221877749	&-0.925804660338255	&0.769880631707084	&-0.240864991838435	&0.0208587330250274\\
0.183235217131754	&-0.787109851166051	&-0.944143000956805	&0.613293643593588	&0.0158948230283889\\
-0.0665844458312762&	-0.866922211430191&	0.698247471706727&	0.981025573223252&	0.0101514764249415\\
-0.494502217917924&	0.382211470283539	&-0.824748517891121	&-0.935346887279527	&0.0160972432916053\\
-0.973108310610189&	-0.643578784698889	&0.999996903007507	&-0.263921398566453	&0.00715355417527909\\
-0.800132635705771&	0.0390203514319736	&0.0939369967131938	&-0.741463689395994	&0.0313505282787613\\
\hline
\end{tabular}
\label{node_weight_print}
\end{table}

\section{Concluding Remarks}

We present a systematic approach, designed quadrature, for computing multivariate quadrature rules in generic settings. The framework uses penalty methods in constrained optimization to ensure positivity of the weights and feasible locations for the nodes. The Gauss-Newton algorithm is used to perform minimization of the penalty-augmented objective function. $L^2$ regularization is utilized to treat ill-conditioned systems encountered during Newton step updates. On regular domains such as hypercubes, our designed quadrature results in considerably fewer nodes (and guaranteed positive weights) compared to alternative multivariate quadrature rules, such as sparse grids, and hence is promising for computational science and engineering involving expensive simulations.
When applied to a benchmark robust topology optimization problem, designed quadrature reduces requisite cost by nearly half compared with sparse grid rules, and achieves higher accuracy.



\end{document}